\newtheorem{thm}{Theorem}[section]
\newtheorem{lem}[thm]{Lemma}
\newtheorem{cor}[thm]{Corollary}
\newtheorem{question}[thm]{Question}
\newcommand{\hs}{Sprig}
\newcommand{\mpl}{mis\`{e}re-play}
\newcommand{\np}{normal-play}
\newcommand{\R}{\mathcal{R}}
\renewcommand{\L}{\mathcal{L}}
\newcommand{\N}{\mathcal{N}}
\renewcommand{\P}{\mathcal{P}}
\newcommand{\cgc}{\!:\!}
\newcommand{\Gb}{\overline{G}}
\newcommand{\Hb}{\overline{H}}
\newcommand{\combgame}[1]{#1}
\newcommand{\cgstar}{\ast}
\newcommand{\cgfuzzy}{\not\gtrless}
\colorlet{colour0}{green}
\colorlet{colour1}{blue}
\colorlet{colour2}{red}
\newcommand{\hbstrings}[1]{
  \path[-] (0,0) edge (0,1);
  \foreach \edgeset [count=\strnum] in #1 {
    \path[-] (0,\strnum) edge (0,\strnum+1);
    \foreach \e [count=\enum] in \edgeset {
      \path[-,colour\e] (\enum-1,\strnum) edge (\enum,\strnum);
      \node at (\enum-1,\strnum){};
    }
    \node at (\enum,\strnum){};
  }
}
\begin{document}

\title{Mis\`{e}re-play Hackenbush Sprigs}

\author{Neil A. McKay, Rebecca Milley, Richard J. Nowakowski\\
Department of Mathematics and Statistics\\
Dalhousie University\\
Halifax, NS, Canada}
\maketitle

\begin{abstract}
A Hackenbush Sprig is a Hackenbush String with the ground edge colored green and the remaining edges either red or blue. We show that in canonical form a Sprig is a star-based number (the ordinal sum of star and a dyadic rational) in mis\`{e}re-play, as well as in normal-play. We find the outcome of a disjunctive sum of Sprigs in mis\`{e}re-play and show that it is the same as the outcome of that sum plus star in normal-play. Along the way it is shown that the sum of a Sprig and its negative is equivalent to 0 in the universe of mis\`{e}re-play dicotic games, answering a question of Allen.
\end{abstract}

Keywords: hackenbush, all-small, star-based number, dicot, mis\`{e}re.

\section{Introduction}
The game \textsc{hackenbush} \citep{BerleCG2001,Conwa2001}
 is played on a finite graph with edges colored blue, red, and green. 
There is one special vertex called the ground, shown by a long horizontal line.
The player Left can move by cutting a blue or green edge (Right a red or green edge)
and removing any portion of the graph no longer connected to the ground. Under \textit{normal-play}, the first player who cannot move loses; under \textit{mis\`ere-play}, the first player who cannot move wins. 
 In \citep{BerleCG2001,Conwa2001} many of the concepts of combinatorial game theory are exhibited using \np\ hackenbush. However, the game is still not completely understood. For example, both normal- and \mpl\ \textsc{hackenbush} are NP-hard \citep{BerleCG2001,Conwa2001,Stewa}.

 A \textsc{hackenbush string} (String) is a path with one end rooted in the ground.
A \textsc{hackenbush flower} (Flower) is a String composed of green edges with loops on top, all red or all blue.
  A \textsc{hackenbush sprig} (\hs) is a String with only the rooted edge being green; Figure \ref{fig:hbsprigs} shows a sum of \hs s.  

  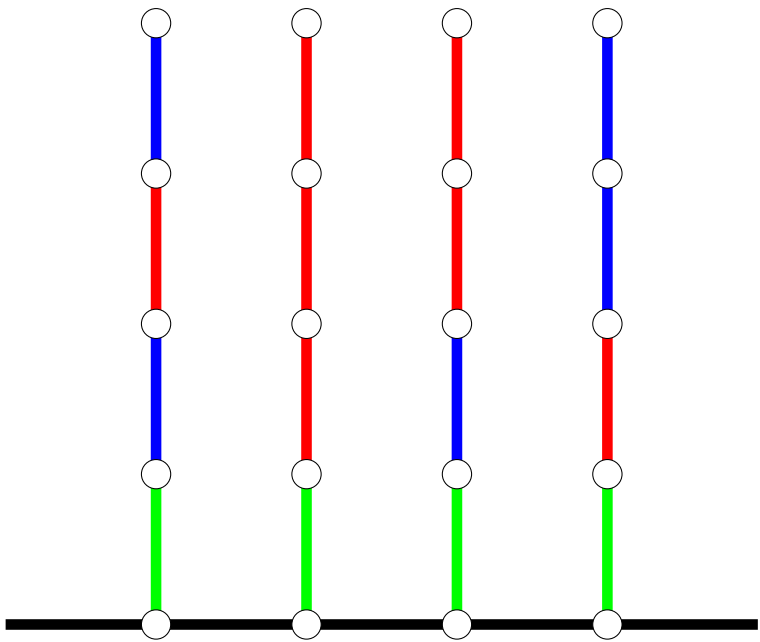
\begin{figure}[ht]
    \centering
    \begin{tikzpicture}[
      rotate=90,
      scale=2,
      every node/.style={shape=circle, draw, fill=white},
      every edge/.style={line width=4, draw}
      ]
      \hbstrings{{{0,2,1,1},{0,1,2,2},{0,2,2,2},{0,1,2,1}}}
    \end{tikzpicture}
    \caption{A game of Hackenbush Sprigs}
    \label{fig:hbsprigs}
  \end{figure}
  
Strings, Sprigs, and Flowers are ordinal sums.
The \textit{ordinal sum} of $G$ and $H$ is the game $G\cgc H=\combgame{\{{\mathcal{G}}^L,G\cgc {\mathcal{H}}^L| {\mathcal{G}}^R,G\cgc {\mathcal{H}}^R\}}$.
We refer to $G$ as the \textit{base} and $H$ as the \textit{dependent}.
In an ordinal sum, a move in the base prevents further play in the dependent, but a move in the dependent does not affect play in the base.
A position of the form $\cgstar \cgc H$ for some $H$ is called \textit{star-based}. Both a Flower and a \hs\ are star-based positions.

A String of $n$ green edges has the same game tree as a nim-heap $\cgstar_n =\combgame{\{0,\cgstar, \cgstar_2,\ldots,\cgstar_{n-1} | 0,\cgstar, \cgstar_2,\ldots,\cgstar_{n-1} \}}$ of size $n$, and so $\cgstar_n=\cgstar\cgc\cgstar_{n-1}$. A Flower is either $\cgstar_n\cgc k$  or $\cgstar_n\cgc -k$ for some positive integer $k$.
A long-standing question of Berlekamp's regarding Flowers (see \citep{BerleCG2001}) can then be rephrased as a question about star-based games:

\medskip
\emph{Who wins $\sum_{i=1}^m\cgstar_{n_i}\cgc k_i$ where $\{n_1,n_2,\ldots,n_m\}$ is a set of positive integers and $\{k_1,k_2,\ldots,k_m\}$ is a set of integers?} 
\medskip

 In this paper we concentrate on \hs s. As hinted at in Berlekamp's Flower question, a disjunctive sum of star-based positions is most often not a star-based position.
In Section \ref{sec:miseresums} we answer our main questions. Namely, if $\{x_1,x_2,\ldots,x_m\}$ is a multi-set of numbers:
  \begin{itemize}
  \item[] \emph{In \mpl, who wins  $\sum_{i=1}^m\cgstar\cgc x_i$?} (Theorem \ref{thm:outcomenostar});
\item[] \emph{In \mpl, who wins $\cgstar+\sum_{i=1}^m\cgstar\cgc x_i$?} (Theorem \ref{thm:outcomewithstar}).
  \end{itemize}  

\emph{In \np, who wins $\sum_{i=1}^m\cgstar\cgc x_i$ and  $\cgstar+\sum_{i=1}^m\cgstar\cgc x_i$?} was essentially analyzed by Conway, \citep{Conwa2001}, and is presented in Section \ref{sec:normalplay} using the concepts and algorithms developed in this paper.
 
 In the next sub-section, we give the definitions and concepts pertinent to this paper; for further definitions and background see \citep{AlberNW2007,BerleCG2001,Conwa2001}. In Section \ref{sec:miseregeneral}, we present the general results needed for analyzing \mpl\ \hs s, in particular proving that $\cgstar\cgc x -\cgstar\cgc x\equiv 0$ in some universes (Corollary \ref{cor:starcolonconjugate}). This is an important result since, in the universe of all \mpl\ games, $G-G\not\equiv 0$ unless $G = \combgame{\{\cdot | \cdot\}}$. In \np\ $G-G=0$, is proved by following the \textit{Tweedledum-Tweedledee}, or copycat, strategy: mirror your opponent's move but in the opposite summand. This is a bad strategy in \mpl!
 
 \subsection{Background}\label{sec:background}

Under \np\ and \mpl, all games have a unique canonical form obtained by eliminating dominated options and by-passing reversible options.
 In \np, many canonical positions have acquired names. In \mpl, positions with those same game trees also appear frequently and we refer to them by their \np\ names. For example, $0=\combgame{\{\cdot|\cdot\}}$; $\cgstar=\combgame{\{0|0\}}$; for a positive integer $n$, $n=\combgame{\{n-1|\cdot\}}$; and for integers $p,q$, $\frac{2p+1}{2^q}=\combgame{\{ \frac{p}{2^{q-1} }|\frac{p+1}{2^{q-1} }\}}$ provided $\gcd(p,q)=1$. 

Let $\N$, $\P$, $\R$, and $\L$ represent the outcomes classes of \textit{Next-, Previous-, Right-,} and \textit{Left-}win games, respectively. In both play conventions, these are partially ordered as in Figure \ref{fig:outcomeclasses}.

\begin{figure}[ht]
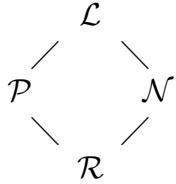

  \centering
  \begin{tabular}{c}
$\L$\\
$\diagup \quad \quad \diagdown$\\
$\P \quad \quad \; \;	\quad \N$\\
$\diagdown \quad \quad \diagup$\\
$\R$\\
\end{tabular}
  \caption{The partial order of outcome classes}
  \label{fig:outcomeclasses}
\end{figure}

 The outcome functions $o^+(G)$ and $o^-(G)$ give the outcome class of $G$ under normal- and \mpl\ respectively. For example, $o^+(0) = \P$ and $o^-(0) = \N$. Similarly, we write $0 \in \P^+$ and $0 \in \N^-$.

For this paper, there are important concepts that have to be handled with care. The \textit{negative} of a game is denoted $-G=\combgame{\{-\mathcal{G}^R|-\mathcal{G}^L\}}$. In \np, the negative is the additive inverse; that is, $G+ (-G)=0$. In \mpl, whether the negative is also the additive inverse depends on the universe being considered. To avoid confusion and inappropriate cancellation, in \mpl\ we represent the negative by $\overline{G}$ instead of $-G$.
In \np\ and \mpl\ \textit{equality} and \textit{inequality} are defined as follows:
 \begin{itemize}
\item[] $G=^{\between} H$ if $o^{\between}(G+X)= o^{\between}(H+X)$ for all games $X$;
\item[] $G\geq^{\between} H$ if $o^{\between}(G+X)\geq o^{\between}(H+X)$ for all games $X$.
\end{itemize}
\noindent where ${\between}$ is either $+$, denoting \np, or $-$, denoting \mpl.
In \np\ games, there are easy tests for equality and inequality:
\begin{itemize}
\item[] $G=0$ iff $G\in \P$;
\item[] $G=H$ iff $G-H=0$;
\item[] $G>H$ iff $G-H>0$.
\end{itemize} 
 
 In \mpl, no such tests exist and equality involves comparing with all games.
However, Plambeck (see \citep{PlambS2008}, for example) had a breakthrough in dealing with impartial mis\`ere games: restrict the universe to which $X$ may belong. Allen \citep{Allen2009}, when exploring partizan mis\`ere games, used the same idea.

Given a universe $\mathcal{X}$,
\begin{itemize}
\item[] $G\equiv H\pmod{\mathcal{X}}$
 if $o^-(G+X)= o^-(H+X)$ for all games $X\in\mathcal{X} $;
\item[] $G\geq^-H\pmod{\mathcal{X}}$
 if $o^-(G+X)\geq o^-(H+X)$ for all games $X\in\mathcal{X} $.
\end{itemize} 

If $G \equiv H$ then $G$ and $H$ are said to be \emph{indistinguishable}. Otherwise, there exists a game $X$ such that $o^-(G+X) \not = o^-(H+X)$ and we say $G$ and $H$ are \emph{distinguishable}.

To avoid the proliferation of superscripts in this paper, we will distinguish between \np\ and \mpl\  relations by reserving $=, >, <,\geq, \leq$ for the order relations in \np\ and $\equiv,\gtrdot,\lessdot,\geqq,\leqq$ for the corresponding relations in \mpl, which should also be accompanied by a reference to a universe.
 The universes of Allen, \citep{Allen2011}, and Plambeck and Siegel, \citep{PlambS2008}, are defined by starting with a single game $G$ and adding all the followers and all disjunctive sums that can be formed.
 Distinguishable elements in a given universe will also be distintinguishable in a larger universe; we make implicit use of this throughout the paper and so present it as a lemma below.
\begin{lem}
Let $\mathcal{X}$and $\mathcal{Y}$ be sets of games with $\mathcal{X}\subset \mathcal{Y}$. 
If $G\not\equiv H\pmod{\mathcal{X}}$ then $G\not\equiv H\pmod{\mathcal{Y}}$.
\end{lem}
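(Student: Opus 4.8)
The plan is to unpack the definition of indistinguishability and observe that a distinguishing witness in the smaller universe is automatically a distinguishing witness in the larger one. The hypothesis $G \not\equiv H \pmod{\mathcal{X}}$ means, by the notion of \emph{distinguishability} introduced above, that there exists a game $X \in \mathcal{X}$ for which $o^-(G+X) \neq o^-(H+X)$. The whole argument rests on the fact that this is an existential statement over the universe, so enlarging the universe cannot remove a witness that already exists.

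First I would fix such a witness $X \in \mathcal{X}$ realizing $o^-(G+X) \neq o^-(H+X)$. Since $\mathcal{X} \subset \mathcal{Y}$, this same $X$ lies in $\mathcal{Y}$. The inequality $o^-(G+X) \neq o^-(H+X)$ is a property of the fixed position $G+X$ compared against $H+X$ and is entirely independent of which universe $X$ is regarded as inhabiting. Hence $X \in \mathcal{Y}$ is a game witnessing $o^-(G+X) \neq o^-(H+X)$, which is precisely the requirement for $G \not\equiv H \pmod{\mathcal{Y}}$.

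There is no genuine obstacle in this proof: it is an immediate consequence of the monotonicity of existential quantification over the set of test games, and the only point worth stating explicitly is the universe-independence of the outcome inequality for the fixed sum. I would therefore present it as a two-line argument, fixing the witness and transporting it along the inclusion $\mathcal{X} \subset \mathcal{Y}$.
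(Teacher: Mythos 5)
Your proof is correct and is essentially identical to the paper's own argument: both fix a witness $X \in \mathcal{X}$ with $o^-(G+X) \neq o^-(H+X)$ and observe that, since $\mathcal{X} \subset \mathcal{Y}$, the same $X$ distinguishes $G$ and $H$ in $\mathcal{Y}$. Your added remark that the outcome inequality is independent of the ambient universe is a reasonable point to make explicit, but it does not change the argument.
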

 
 \begin{proof}
 If there is some game $X\in\mathcal{X}$ with $o^-(G+X)\neq o^-(H+X)$  
 for $X\in \mathcal{X}$, then $X$ also 
 distinguishes $G$ and $H$ in the universe $\mathcal{Y}$.
 \end{proof}

  Any position in \np\ with the property that in all positions either both players have a move or neither does, is an \textit{infinitesimal} game, so it is reasonable to call this class of positions \textit{all-small} (see \citep{Conwa2001}, page 101). In \mpl\ these games are not infinitesimal and thus the term all-small is inappropriate.  We\footnote{`We' means Meghan Allen, Alan Guo, Neil McKay, Erza Miller, Richard Nowakowski, Thane Plambeck, Aaron Siegel, Angela Siegel, and Mike Weimerskirsch.}, carrying the gardening theme, introduced the term \textit{dicot} for a position in which all positions, have options for both players or neither player. For example, a \hs\ is a dicot, as are sums of \hs s.
Let $\mathcal{D}$ denote the class of all dicot games and $\mathcal{S}$ the set of all positions that are a finite sum of \hs s. As $\mathcal{S}\subset \mathcal{D}$, if two elements of $\mathcal{S}$ are distinguishable in $\mathcal{S}$ then they are distinguishable in $\mathcal{D}$. 

As mentioned earlier, in the entire mis\`ere universe, $G+\overline{G}\not \equiv 0$ unless $G=\combgame{\{\cdot|\cdot\}}=0$. In \citep{Allen2009}, it is shown that $\cgstar + \cgstar \equiv 0\pmod{\mathcal{D}}$; Allen further asks: \textit{For which $G$ is $G+\overline{G}\equiv 0\pmod{\mathcal{D}}$?} We shall see that every \hs\ has this desirable property.

In \np, a red-blue String is a number \citep[page 77]{BerleCG2001}, so a \hs\ is equal to $\cgstar\cgc x$ for some number $x$. Moreover, by \citep[Theorem 94]{Conwa2001}, $\cgstar\cgc x$ is in canonical form.
Culminating in Theorem \ref{thm:numbersarenumbers}, we show below that this is also true in \mpl\ restricted to the dicot universe.

\section{Mis\`ere-play Sprigs}\label{sec:miseregeneral}
In this section we present general results useful for analyzing \hs s in \mpl.
\begin{thm}\label{thm:starcolon}
  For any game $G$, $o^-(\cgstar\cgc G) = o^+(G)$.
\end{thm}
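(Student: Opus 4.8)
The plan is to induct on the game tree of $G$, comparing $o^-(\cgstar\cgc G)$ with $o^+(G)$ by tracking separately the two bits of information that determine an outcome class: whether Left wins moving first, and whether Right wins moving first. First I would unfold the ordinal sum. Since $\cgstar=\combgame{\{0\mid 0\}}$, the definition of ordinal sum gives
\[
\cgstar\cgc G=\combgame{\{0,\ \cgstar\cgc\mathcal{G}^L \mid 0,\ \cgstar\cgc\mathcal{G}^R\}},
\]
so in $\cgstar\cgc G$ each player always has the \emph{base move} to $0$ available, together with the \emph{dependent moves} $\cgstar\cgc G^L$ (for Left) and $\cgstar\cgc G^R$ (for Right) inherited from the options of $G$. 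The point of the whole argument is that the base move to $0$ is a losing move in \mpl: after moving to $0$ the opponent is to move at $0$, and since $o^-(0)=\N$ the opponent wins. Thus the base edge supplies exactly one forced ``pass-to-loss'' to each player.

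For the base case I would take $G=0$, where $\cgstar\cgc 0=\cgstar$, and check directly that the first player to move in $\cgstar$ must move to $0$ and hence loses in \mpl, so $o^-(\cgstar)=\P=o^+(0)$. For the inductive step, consider whether Left, moving first in $\cgstar\cgc G$, wins in \mpl. Left wins precisely when she has some option from which Right, moving first, loses. The $0$-option never qualifies, because $o^-(0)=\N$ means Right (to move at $0$) wins there; so only the dependent options $\cgstar\cgc G^L$ can help Left. By the induction hypothesis $o^-(\cgstar\cgc G^L)=o^+(G^L)$ for every proper follower, so ``Right loses moving first in $\cgstar\cgc G^L$'' is equivalent to ``Right loses moving first in $G^L$'' under \np. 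Therefore Left wins $\cgstar\cgc G$ moving first in \mpl\ iff some $G^L$ leaves Right losing-first in \np, which is exactly the condition for Left to win $G$ moving first in \np. The symmetric computation handles Right. Since both ``who-wins-moving-first'' bits agree between $o^-(\cgstar\cgc G)$ and $o^+(G)$, the outcome classes coincide.

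I would be careful to align the boundary of the induction with the convention flip, since that is the genuinely delicate point rather than any hard computation. When $G$ has no left options, the only Left option of $\cgstar\cgc G$ is the base move to $0$, which loses; this matches $o^+(G)$, where Left, unable to move, also loses in \np. In other words, a player is compelled to make the base move exactly when his dependent moves are exhausted, and making it loses in \mpl\ precisely as running out of moves loses in \np. The main obstacle is thus conceptual: recognizing that the single edge of $\cgstar$ at the base converts the \mpl\ ``last player wins nothing'' convention at the leaves back into the \np\ convention for $G$. Once that is isolated, the induction collapses to the two disjunctions above, with the $0$-term always contributing \emph{false} and dropping out.
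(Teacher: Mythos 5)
Your proof is correct and rests on the same key observation as the paper's: in \mpl\ the base move to $0$ is always a losing move, so the ordinal sum $\cgstar\cgc G$ is decided by who makes the last move in the dependent $G$, which is exactly the \np\ winner of $G$. The paper states this as a brief strategic argument, while you formalize the identical idea as an induction on the game tree of $G$ (with the empty-options case supplying the convention flip); the content is essentially the same, just made rigorous.
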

\begin{proof}
  The game $\cgstar\cgc G$ will not end until one of the players moves in $\cgstar$, and at that point the player doing so loses. The only way to guarantee a win in the ordinal sum is to make the last move in $G$.  
\end{proof}

\begin{cor}
If $x$ is a number then
 \[
o^-(\cgstar\cgc x)=
\begin{cases}
L,\mbox{ if } x>0;\\
P,\mbox{ if } x=0;\\
R,\mbox{ if } x<0.
\end{cases}
\]
\end{cor}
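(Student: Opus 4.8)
The plan is to derive this corollary directly from Theorem~\ref{thm:starcolon}, which tells us that $o^-(\cgstar\cgc x) = o^+(x)$. This reduces the mis\`ere-play question entirely to a normal-play question: I just need to determine the normal-play outcome $o^+(x)$ of a number $x$, and then translate each normal-play outcome class into the corresponding mis\`ere outcome class that Theorem~\ref{thm:starcolon} asserts is equal to it.

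First I would recall the standard normal-play facts about numbers. A positive number $x>0$ is a (first-player-independent) Left win, so $o^+(x)=\L$; a negative number $x<0$ is a Right win, so $o^+(x)=\R$; and $x=0$ is a Previous-player win, so $o^+(0)=\P$. These are textbook results about numbers in normal-play combinatorial game theory (see \citep{BerleCG2001,Conwa2001}): the sign of a number determines which player wins going second (or, for nonzero $x$, which player wins regardless of who moves first).

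Then I would simply apply Theorem~\ref{thm:starcolon} to transport each of these three outcomes across the equality $o^-(\cgstar\cgc x)=o^+(x)$. For $x>0$ this gives $o^-(\cgstar\cgc x)=o^+(x)=\L$; for $x=0$ it gives $o^-(\cgstar\cgc 0)=o^+(0)=\P$; and for $x<0$ it gives $o^-(\cgstar\cgc x)=o^+(x)=\R$. These are exactly the three cases in the statement, so the corollary follows immediately.

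I expect there to be essentially no obstacle here, since Theorem~\ref{thm:starcolon} does all the heavy lifting and the normal-play outcomes of numbers are entirely standard. The only point requiring any care is making sure the normal-play classification of numbers is quoted correctly and matched to the right case; but this is routine, and the proof is little more than a two-line invocation of the preceding theorem together with the known signs-to-outcomes dictionary for numbers.
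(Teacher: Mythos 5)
Your proof is correct and matches the paper's intent exactly: the paper states this corollary without proof precisely because it is the immediate combination of Theorem~\ref{thm:starcolon} with the standard sign-to-outcome classification of numbers in normal play, which is just what you wrote. No gaps, and nothing further is needed.
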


\begin{thm} \label{thm:G-G} If $G+\Gb \in \N^-$ and $H+\Hb \in \N^-$ for all followers $H$ of $G$, then $G+\Gb \equiv 0 \pmod{\mathcal{D}}$.
\end{thm}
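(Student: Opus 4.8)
The plan is to unwind the definition of indistinguishability and prove the outcome identity $o^-((G+\Gb)+X)=o^-(X)$ for every dicot $X$, since $o^-(0+X)=o^-(X)$. The first thing I would notice is that the hypotheses are hereditary: if $H$ is a follower of $G$ then $H+\Hb\in\N^-$ by assumption, and every follower of $H$ is a follower of $G$, so $H$ again satisfies the hypotheses of the theorem. This lets me set up an outer induction on the game tree of $G$ and assume, for every proper follower $H$, the full conclusion $H+\Hb\equiv 0\pmod{\mathcal{D}}$. I would also record the self-conjugacy $\overline{G+\Gb}=\Gb+G=G+\Gb$; combined with the fact that $\mathcal{D}$ is closed under conjugation, it is enough to match the Left-moving-first winner of $(G+\Gb)+X$ with that of $X$ for all dicot $X$, the Right-moving-first statement then following by applying the Left result to $\overline{X}$.

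For the inner argument I would induct on the dicot $X$ and compute outcomes with the usual recursive \mpl\ rules: the player to move wins if and only if she has no move or she has some option in which the opponent, to move, loses. The dicot hypothesis does real work here. A nonzero dicot has options for both players, so the only situation in which one player is stuck while the other is not occurs at $X=0$; there the required identity $o^-(G+\Gb)=o^-(0)=\N$ is exactly the assumption $G+\Gb\in\N^-$, which furnishes the base case. For nonzero $X$ I would split the Left options of $(G+\Gb)+X$ into those that move in $X$ and those that move in $G+\Gb$. The options that move in $X$ are handled immediately by the induction on $X$, reducing their contribution to the corresponding term for $X$ alone.

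The heart of the proof, and the step I expect to be the main obstacle, is controlling the options that move inside $G+\Gb$, precisely because the naive Tweedledum--Tweedledee response is fatal in \mpl. My plan is a single mirroring move: whenever a player moves in one summand of $G+\Gb$, say $G\to G^L$ or $\Gb\to\overline{G^R}$, the opponent answers with the conjugate move in the other summand, reaching $G^L+\overline{G^L}+X$ or $G^R+\overline{G^R}+X$, that is $H+\Hb+X$ with $H$ a proper follower of $G$. Here the outer induction applies: $H+\Hb\equiv 0\pmod{\mathcal{D}}$, so $o^-(H+\Hb+X)=o^-(X)$ and the detour into $G+\Gb$ becomes invisible. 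Feeding this back into the recursion, I would argue that if Left cannot win $X$ moving first, then every Left option of $(G+\Gb)+X$ that moves inside $G+\Gb$ is refuted by Right's mirror response, which reaches a position indistinguishable from $X$ in which Left, to move, loses; hence Left cannot win $(G+\Gb)+X$ moving first either, while if Left can win $X$ moving first she simply does so in the $X$ component. The symmetric computation controls the Right options and closes the induction. The delicate points to get right will be the bookkeeping of which conjugate move mirrors which, checking that the mirror move is always legal, and confirming that the induction on $G$ applies to the exact follower $H$ produced by each mirror, all of which rest on the hypothesis that every conjugate sum of a follower of $G$ lies in $\N^-$.
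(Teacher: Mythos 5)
Your proposal is correct and rests on exactly the same ideas as the paper's proof: defer to the winner of $X$ for moves made in $X$, answer any move inside $G+\Gb$ with a single mirroring move to reach $H+\Hb+X$ with $H+\Hb\equiv 0$ by induction on the followers of $G$, and invoke the hypothesis $G+\Gb\in\N^-$ precisely when the dicot $X$ has no moves left. The paper phrases this as a strategy-following argument while you phrase it as a double induction with the recursive outcome rules (and make the conjugation symmetry explicit), but that is a difference of formalization, not of approach.
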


\begin{proof}
Let $X$ be an finite sum of games in $\mathcal{D}$ and suppose Left wins $X$. We give a strategy for Left to win $G+\Gb+X$. Left follows her original strategy for $X$ unless no move is available in $X$ for Left (or Right) in which case Left plays her winning move in $G+\Gb$. If Right at some point plays in $G+\Gb$, Left mirrors Right's move leaving $G^R+\overline{G^R}$ or $G^L+\overline{G^L}$, which is equivalent to $0$ by induction. Right must resume play in $X$ and thus loses.
\end{proof}

\begin{cor} \label{cor:star+star}{\rm\citep{Allen2011}}
In the dicot universe, $*+* \equiv 0\pmod{\mathcal{D}}$.
\end{cor}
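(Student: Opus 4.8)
The plan is to apply Theorem \ref{thm:G-G} with $G = \cgstar$. First I would observe that $\cgstar$ is its own negative: since $\cgstar = \combgame{\{0|0\}}$, the negation rule $-G = \combgame{\{-\mathcal{G}^R|-\mathcal{G}^L\}}$ gives $\overline{\cgstar} = \combgame{\{-0|-0\}} = \cgstar$, so $\cgstar + \overline{\cgstar} = \cgstar + \cgstar$. Hence establishing $\cgstar + \cgstar \equiv 0 \pmod{\mathcal{D}}$ reduces to checking the two hypotheses of Theorem \ref{thm:G-G} in the case $G = \cgstar$.

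For the first hypothesis I would verify $\cgstar + \cgstar \in \N^-$ directly from the (shallow) game tree. Every move in $\cgstar + \cgstar$ leaves $\cgstar$, and the unique move from $\cgstar$ leaves $0$; the play is therefore forced and consists of exactly two moves. Under the \mpl\ convention the player unable to move wins, so after the second player removes the last $\cgstar$ the first player cannot move and wins. Thus $\cgstar + \cgstar$ is a Next-player win. For the second hypothesis I would note that the only proper follower of $\cgstar$ is $0$, and $0 + \overline{0} = 0$ lies in $\N^-$ because $o^-(0) = \N$, as recalled in Section \ref{sec:background}.

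With both hypotheses in hand, Theorem \ref{thm:G-G} yields $\cgstar + \cgstar \equiv 0 \pmod{\mathcal{D}}$ at once. Because $\cgstar$ has so little structure, there is no substantive obstacle here; the only place calling for care is the outcome computation for $\cgstar + \cgstar$, where one must use that in \mpl\ the \emph{last} player to move loses. This is precisely what turns the two-move game into a Next-player win, in contrast to \np\ where $\cgstar + \cgstar$ is a Previous-player win.
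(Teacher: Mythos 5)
Your proposal is correct and matches the paper's intended argument: the paper states this as an immediate consequence of Theorem \ref{thm:G-G} (with no written proof), and your instantiation with $G = \cgstar$, verifying $\cgstar+\cgstar \in \N^-$ by the forced two-move play and $0+\overline{0}=0 \in \N^-$ for the only proper follower, is exactly the verification the paper leaves to the reader.
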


\begin{cor} \label{cor:starcolonconjugate}
  If $x$ is a number then  $\cgstar\cgc x + \cgstar\cgc \overline{x} \equiv 0\pmod{\mathcal{D}}$.
\end{cor}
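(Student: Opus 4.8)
The plan is to apply Theorem~\ref{thm:G-G} with $G=\cgstar\cgc x$. First I would record the structural fact that conjugation commutes with the ordinal sum over a self-conjugate base, so that $\overline{\cgstar\cgc x}=\overline{\cgstar}\cgc\overline{x}=\cgstar\cgc\overline{x}$; this identifies $G+\Gb$ with the target sum $\cgstar\cgc x+\cgstar\cgc\overline{x}$. Theorem~\ref{thm:G-G} then demands two things: that $G+\Gb\in\N^-$, and that $H+\Hb\in\N^-$ for every follower $H$ of $G$. My first observation is that both conditions collapse to a single claim, because every follower of $\cgstar\cgc x$ is either $0$ (obtained by cutting the green ground edge, i.e.\ moving in the base $\cgstar$) or of the form $\cgstar\cgc y$, where $y$ is a follower of the number $x$ and hence itself a number. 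Since $0+\overline{0}=0\in\N^-$, everything rests on the following claim.

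\emph{Claim: for every number $y$, $\cgstar\cgc y+\cgstar\cgc\overline{y}\in\N^-$.} This is the crux of the argument, and the step I expect to be the main obstacle, since the naive copycat response is fatal in \mpl. The winning idea for the first player is instead to \emph{sacrifice} one of the two sprigs: cutting the ground edge of a component $\cgstar\cgc z$ is a legal move for either player and leaves $0$ in that summand. Suppose Left moves first. If $y\ge 0$ I would have Left cut the ground edge of $\cgstar\cgc\overline{y}$, leaving $\cgstar\cgc y$; by the corollary to Theorem~\ref{thm:starcolon}, $o^-(\cgstar\cgc y)\in\{\L,\P\}$, so Right, now to move, loses. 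If $y<0$ I would instead have Left cut the ground edge of $\cgstar\cgc y$, leaving $\cgstar\cgc\overline{y}$ with $\overline{y}>0$, whence $o^-(\cgstar\cgc\overline{y})=\L$ and again Right to move loses. Thus Left moving first wins; the position is its own conjugate, so the identical argument (or symmetry) shows Right moving first wins, giving $\cgstar\cgc y+\cgstar\cgc\overline{y}\in\N^-$.

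With the claim in hand the rest is bookkeeping. Taking $y=x$ gives $G+\Gb\in\N^-$. For the follower condition, the followers $H$ are exactly $0$ and the games $\cgstar\cgc y$ with $y$ a number, and for each of these $H+\Hb\in\N^-$ --- the first by inspection and the rest by the claim. Both hypotheses of Theorem~\ref{thm:G-G} therefore hold, and we conclude $\cgstar\cgc x+\cgstar\cgc\overline{x}\equiv 0\pmod{\mathcal{D}}$. The only points requiring care beyond the claim are the verification that conjugation distributes over the ordinal sum and that the dependent's followers remain numbers; both are routine inductions on the game tree.
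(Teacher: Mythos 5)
Your proof is correct and follows essentially the same route as the paper's: reduce to Theorem~\ref{thm:G-G} via the identity $\overline{\cgstar\cgc x}=\cgstar\cgc\overline{x}$, observe that all nonzero followers are again star-based numbers, and establish the $\N^-$ condition by the sacrifice move (cutting the green edge of the appropriate component and invoking Theorem~\ref{thm:starcolon}). The only cosmetic difference is that you handle the sign cases and $y=0$ uniformly inside the claim, where the paper uses a without-loss-of-generality assumption $x>0$ together with Corollary~\ref{cor:star+star} for $x=0$.
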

\begin{proof}
If $x=0$ then this is just Corollary \ref{cor:star+star}.  
Otherwise, assume without loss of generality that $x>0$ (i.e., $x\in \L^+$). As $\overline{\cgstar\cgc x}$ is the same position as $\cgstar\cgc \overline{x}$, and any non-zero follower of $\cgstar\cgc x$ is $\cgstar\cgc x'$ for a number $x'$, it suffices by Theorem \ref{thm:G-G} to show $\cgstar\cgc x+\cgstar\cgc \overline{x} \in \N^-$ for any number $x$. Left playing first on $\cgstar\cgc x+\cgstar\cgc \overline{x}$ moves  $\cgstar\cgc \overline{x}$ to $0$ and then wins playing second under \mpl\ on $\cgstar\cgc x$, by Theorem \ref{thm:starcolon}. Since $\overline{x} \in \R^+$, Right can similarly win this sum playing first, and so $\cgstar\cgc x+\cgstar\cgc \overline{x} \in \N^-$. 
\end{proof}

\begin{thm}\label{thm:miserecolon1}
 Let $G$ have a left and a right option. If $H \geq 0$ then in any universe $\mathcal{X}$, $G \cgc H \geqq G \pmod{\mathcal{X}}$.
\end{thm}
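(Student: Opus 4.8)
The plan is to read the claimed inequality $G\cgc H\geqq G\pmod{\mathcal{X}}$ through the outcome order of Figure~\ref{fig:outcomeclasses}: for a fixed game $X$, the condition $o^-(G\cgc H+X)\geq o^-(G+X)$ is equivalent to the two implications ``Left wins $G+X$ moving first $\Rightarrow$ Left wins $G\cgc H+X$ moving first'' and the same statement with ``second.'' Since the argument will not depend on which universe $X$ lives in, it suffices to prove these two implications for every game $X$, and for this I would convert a \mpl\ winning strategy for Left on $G+X$ into one on $G\cgc H+X$.

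The strategy I would hand Left runs two threads simultaneously. On the base together with $X$ she plays her \mpl\ winning strategy for $G+X$, reading a move to some $\mathcal{G}^L$ or $\mathcal{G}^R$ in the ordinal sum as the corresponding base move of $G+X$; such a move collapses the ordinal sum and erases the dependent, exactly as the matching base move behaves in $G+X$. Whenever Right instead plays inside the dependent, Left answers inside the dependent according to a \np\ strategy witnessing $H\geq 0$ --- that is, the strategy by which Left, moving second, makes the last move in $H$. Left never initiates a move in the dependent, so from the dependent's point of view the moves read Right, Left, Right, Left, $\ldots$; each such pair is turn-neutral and leaves the base-and-$X$ thread untouched and on Right's move, so the \mpl\ strategy for $G+X$ always sees a legal alternating play with the correct player to start.

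First I would isolate the one place the hypothesis is used: because $G$ has both a left and a right option, neither player is ever the one stuck while the dependent is present, since a player with no dependent move can still move in the base. Thus exhausting one's dependent moves never triggers a \mpl\ win, the dependent genuinely behaves as a \np\ game, and $H\geq 0$ is exactly the hypothesis that lets Left's replies in the dependent never fail and end (if at all) with her own move. Consequently the dependent is eventually either played out --- its last move Left's --- or discarded by a base move, and in either case the whole game can terminate only after the base is played, which discards the dependent first.

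The hard part will be the bookkeeping that shows Right makes the final move. I would verify that whenever it is Left's turn and moves remain her strategy prescribes one: if the dependent is still present the base is intact and the move to $\mathcal{G}^L$ is available, while if the dependent is gone the position is the pure \mpl\ game $G+X$ on which Left is winning. Because the base must be played for the game to end, and the first base move erases the dependent, the final move of $G\cgc H+X$ coincides with the final move of the $G+X$ thread, which Left's strategy forces onto Right; this settles both starting conventions once the $G+X$ thread is seeded with the matching (first- or second-player) winning strategy. The main obstacle is precisely confirming that the Right--Left pairing in the dependent is always completable and turn-neutral, so that the two threads can never conspire to leave Left on the last move.
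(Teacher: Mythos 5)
Your proposal is correct and takes essentially the same approach as the paper: Left follows her winning strategy for $G+X$, answers any of Right's moves in the dependent locally with a second-player strategy witnessing $H \geq 0$, and uses the hypothesis that $G$ has both a Left and a Right option to guarantee that the game cannot end (and neither player is ever stuck) while the dependent is still present. The paper's proof is simply a terser statement of this same amended-strategy argument, with your turn-neutrality and last-move bookkeeping left implicit.
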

\begin{proof}
Given a strategy in $G + X$, where $X \in \mathcal{X}$, Left can do at least as well in $G \cgc  H + X$ by following this amended strategy: if Right plays in $H$ then respond in $H$, otherwise follow the original strategy for $G + X$. As $H \ge 0$, Left can always respond to Right's moves in $H$. Also, $G$ has options for both players so the addition of $H$ is of no benefit to Right.
\end{proof}

\begin{cor}\label{cor:miserecolon2}
Let $x$ and $y$ be red-blue Strings. If $y > 0$ then
\[\cgstar\cgc x\cgc y \gtrdot \cgstar\cgc x\pmod{\mathcal{S}}.\]
\end{cor}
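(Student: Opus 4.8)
The plan is to apply Theorem~\ref{thm:miserecolon1} directly, since Corollary~\ref{cor:miserecolon2} is essentially its specialization to star-based positions. First I would identify $G = \cgstar\cgc x$; by hypothesis $x$ is a red-blue String, so $\cgstar\cgc x$ is a \hs\ (or at least has the form of star ordinal-summed with a number), and a \hs\ is a dicot, meaning $G$ has options for both Left and Right. This verifies the structural hypothesis of Theorem~\ref{thm:miserecolon1} that ``$G$ has a left and a right option.'' Next, taking $H = y$ with $y > 0$, Theorem~\ref{thm:miserecolon1} immediately yields $G\cgc H \geqq G$, that is, $\cgstar\cgc x\cgc y \geqq \cgstar\cgc x \pmod{\mathcal{S}}$. (Here one should note that $(\cgstar\cgc x)\cgc y = \cgstar\cgc(x\cgc y)$ by associativity of the ordinal sum, so the notation $\cgstar\cgc x\cgc y$ is unambiguous.)

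The remaining work is to upgrade the non-strict $\geqq$ to the strict $\gtrdot$. For this I would exhibit a distinguishing game $X \in \mathcal{S}$ witnessing $\cgstar\cgc x\cgc y \not\equiv \cgstar\cgc x \pmod{\mathcal{S}}$, so that combined with $\geqq$ we get strict inequality. The natural candidate is $X = \cgstar\cgc\overline{x}$, its conjugate. By Corollary~\ref{cor:starcolonconjugate} we have $\cgstar\cgc x + \cgstar\cgc\overline{x} \equiv 0$, hence $o^-\big(\cgstar\cgc x + \cgstar\cgc\overline{x}\big) = o^-(0) = \N$. On the other side, I would compute $o^-\big(\cgstar\cgc x\cgc y + \cgstar\cgc\overline{x}\big)$ and show it lies strictly above $\N$ in the outcome order of Figure~\ref{fig:outcomeclasses} --- concretely, that it is in $\L^-$. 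The idea is that adding the positive dependent $y$ on top of $\cgstar\cgc x$ gives Left a genuine extra resource beyond the cancellation with $\cgstar\cgc\overline{x}$: the component $\cgstar\cgc x\cgc y$ behaves like $\cgstar\cgc(x\cgc y)$ where $x\cgc y$ is a number strictly larger than $x$, breaking the exact $\N$-balance in Left's favour.

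The main obstacle I expect is precisely this outcome computation for the distinguishing sum: one must verify that $\cgstar\cgc x\cgc y + \cgstar\cgc\overline{x} \in \L^-$ rather than merely $\geqq \N$. I would argue this via a strategy description analogous to the proofs of Theorems~\ref{thm:G-G} and~\ref{thm:miserecolon1}. Left responds to any Right move in $\cgstar\cgc\overline{x}$ by the cancelling move in $\cgstar\cgc x\cgc y$ (or, after either star is cut, by Theorem~\ref{thm:starcolon} reducing to a \np\ analysis where $y > 0$ guarantees Left a move); the surplus provided by $y$ ensures Left always has the last move regardless of who starts. The care needed is to confirm Left wins both as first and as second player, which is what places the sum in $\L^-$ and strictly above the $\N^-$ value of the cancelling sum, thereby distinguishing the two positions and promoting $\geqq$ to $\gtrdot$.
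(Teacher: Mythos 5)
Your first step is sound and matches the paper exactly: Theorem \ref{thm:miserecolon1} applied with $G = \cgstar\cgc x$ (which, being star-based, has options for both players) and $H = y \geq 0$ gives $\cgstar\cgc x\cgc y \geqq \cgstar\cgc x \pmod{\mathcal{S}}$. The gap is in the distinguishing step. Your proposed witness $X = \cgstar\cgc\overline{x}$ does not distinguish the two positions, because your key claim that $\cgstar\cgc x\cgc y + \cgstar\cgc\overline{x} \in \L^-$ is false. Take $x=1$, $y=1$, so $\cgstar\cgc x\cgc y = \cgstar\cgc 2$ and the sum is $\cgstar\cgc 2 + \cgstar\cgc\overline{1}$. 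Right moving first chops the green edge of $\cgstar\cgc 2$, leaving $\cgstar\cgc\overline{1}$, which is in $\R^-$ by Theorem \ref{thm:starcolon}; so Right wins moving first, and the sum is in $\N^-$ --- the same outcome class as $\cgstar\cgc 1 + \cgstar\cgc\overline{1} \equiv 0$. This failure is general, not an artifact of the example: in a sum of one Left-favourable and one Right-favourable Sprig with no extra star, the first player simply annihilates the opponent's Sprig by chopping its green edge, and the surplus $y$ sitting on top of Left's Sprig never gets to act. In mis\`ere play a larger number on a Sprig does not convert $\N$ into $\L$; compare Theorem \ref{thm:outcomenostar}, where the outcome for $\Delta(G)=0$ is $\N$ regardless of $\epsilon(G)$. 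The edge $\epsilon$ only becomes visible in the presence of an additional $\cgstar$ summand (Theorem \ref{thm:outcomewithstar}), so intuition imported from normal play (``bigger number breaks the balance in Left's favour'') misleads here.

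That observation is also the repair, and it is what the paper does: take the witness $X = \cgstar\cgc\overline{x} + \cgstar \in \mathcal{S}$. By Corollary \ref{cor:starcolonconjugate}, $\cgstar\cgc x + X \equiv \cgstar \pmod{\mathcal{D}}$, hence it lies in $\P^-$. On the other hand, in $\cgstar\cgc x\cgc y + X$, Left moving first chops the bottom edge of $y$ (blue, since $y>0$), moving to exactly that $\P^-$ position, so Left wins going first and $\cgstar\cgc x\cgc y + X \notin \P^-$. The outcomes against $X$ differ, which is what upgrades your $\geqq$ to $\gtrdot$.
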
 
\begin{proof}
 By Theorem \ref{thm:miserecolon1}, $\cgstar\cgc x\cgc y  \geqq \cgstar\cgc x$. It remains to find a game in $\mathcal{S}$ that can distinguish $\cgstar\cgc x\cgc y$ and $\cgstar\cgc x$. Consider $\cgstar\cgc \overline{x} + \cgstar \in \mathcal{S}$. The sum $\cgstar \cgc x + \cgstar \cgc \overline{x} + \cgstar$ is in $\P^-$ as $\cgstar\cgc x+\cgstar\cgc \overline{x} + \cgstar \equiv \cgstar \pmod{\mathcal{D}}$ by Corollary \ref{cor:starcolonconjugate}.
Also, because $y>0$, it starts with a blue edge. Going first in $\cgstar\cgc x\cgc y + \cgstar\cgc \overline{x} + \cgstar$ Left wins by moving to $\cgstar\cgc x+\cgstar\cgc \overline{x} + \cgstar$ and thus we have distinguished the game in question. 
\end{proof}

\begin{thm}\label{thm:orderedsprigs}
Let $x$ and $y$ be red-blue Strings. If $x > y$ then
\[\cgstar\cgc x \gtrdot \cgstar\cgc y\pmod{\mathcal{S}}.\]
\end{thm}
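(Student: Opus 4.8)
The plan is to reduce the desired strict comparison to the ordinal-sum inequality already proved in Corollary \ref{cor:miserecolon2} (adjoining a positive String strictly increases a star-based position), together with its Left--Right conjugate, by splitting $x$ and $y$ at their first point of disagreement. Reading both Strings from the ground up, I would let $w$ be their longest common initial segment, so that $x = w \cgc x'$ and $y = w \cgc y'$ where $x'$ and $y'$ have no common first edge, and set $B = \cgstar\cgc w$. By associativity of the ordinal sum, $\cgstar\cgc x = B \cgc x'$ and $\cgstar\cgc y = B \cgc y'$; since $B$ is again a star-based position over the red--blue String $w$, Corollary \ref{cor:miserecolon2} applies directly with base String $w$.

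The structural input I rely on is the standard identification of a red--blue String, read from the ground with blue $=+$ and red $=-$, with the sign-expansion of the number it represents, under which the numeric order is the lexicographic order in which a $+$ outranks termination, which in turn outranks a $-$. From $x > y$ this yields: if $x'$ is non-empty it begins with a blue edge and so $x' > 0$; if $y'$ is non-empty it begins with a red edge and so $y' < 0$; and at least one of $x', y'$ is non-empty since $x \ne y$. I would then sandwich the two Sprigs: whenever $x' \ne \varnothing$, Corollary \ref{cor:miserecolon2} gives $\cgstar\cgc x = B \cgc x' \gtrdot B$, and whenever $x' = \varnothing$ we simply have $\cgstar\cgc x = B$; dually, whenever $y' \ne \varnothing$ we get $\cgstar\cgc y = B \cgc y' \lessdot B$, and otherwise $\cgstar\cgc y = B$. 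In every case this gives $\cgstar\cgc x \geqq B \geqq \cgstar\cgc y \pmod{\mathcal{S}}$, with at least one of the two inequalities strict.

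Two small points finish the argument. First, the dual of Corollary \ref{cor:miserecolon2}---that $y < 0$ forces $\cgstar\cgc x \cgc y \lessdot \cgstar\cgc x$---I would obtain by conjugation: applying $\overline{\;\cdot\;}$ turns $\cgstar\cgc x \cgc y$ into $\cgstar\cgc \overline{x}\cgc \overline{y}$, reverses the mis\`ere order, and fixes the universe since $\mathcal{S}$ is closed under conjugation, so Corollary \ref{cor:miserecolon2} applied to the positive String $\overline{y}$ transfers to the negative case. Second, to pass from the sandwich to a genuinely strict inequality I note that $\geqq$ is transitive and argue distinguishability by contradiction: if $\cgstar\cgc x \equiv \cgstar\cgc y$ held, then with $\cgstar\cgc x \geqq B \geqq \cgstar\cgc y$ we would get $B \equiv \cgstar\cgc x \equiv \cgstar\cgc y$, contradicting the strict inequality supplied by Corollary \ref{cor:miserecolon2}; hence $\cgstar\cgc x \gtrdot \cgstar\cgc y \pmod{\mathcal{S}}$. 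The main obstacle I anticipate is the first step: stating and justifying the first-difference decomposition carefully---in particular that $x > y$ forces blue-over-red at the branch, and correctly treating the cases where one String is an initial segment of the other---so that each piece $x', y'$ is a bona fide signed String to which Corollary \ref{cor:miserecolon2} and its conjugate genuinely apply.
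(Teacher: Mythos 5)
Your proposal is correct and follows essentially the same route as the paper's own proof: decompose $x$ and $y$ at their longest common initial segment $z$, then sandwich $\cgstar\cgc z$ between $\cgstar\cgc x$ and $\cgstar\cgc y$ using Corollary \ref{cor:miserecolon2}, with strictness from $x \neq y$. Your write-up is actually more careful than the paper's, which leaves implicit both the conjugate form of Corollary \ref{cor:miserecolon2} needed for the $y' < 0$ side and the antisymmetry argument that upgrades the sandwich to a strict inequality.
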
 

\begin{proof}
Let $z$ be the common part of $x$ and $y$ such that $x = z \cgc x'$ and $y = z \cgc y'$. At most one of $x'$ and $y'$ is empty, and $x'$ may only start with a blue edge and $y'$ may only start with a red edge. By Corollary \ref{cor:miserecolon2}, $\cgstar\cgc x \equiv \cgstar\cgc z \cgc x' \geqq \cgstar \cgc z \geqq \cgstar\cgc z \cgc y' \equiv \cgstar\cgc y\pmod{\mathcal{S}}$ and equivalence does not hold in throughout because $x \not = y$.
\end{proof}

In \np, a \hs\ with $x$ as the value of the red-blue String is easily seen to have the value $\cgstar\cgc x$ since all moves are dominated except for $\combgame{\{0,\cgstar\cgc x^L|0,\cgstar\cgc x^R\}}$. We now show the same for \mpl.

\begin{thm}\label{thm:numbersarenumbers}
  In \mpl\ the canonical form of a \hs\ is $\cgstar \cgc x$ where $x$ is the \np\ value of the red-blue part of the \hs.
\end{thm}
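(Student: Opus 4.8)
The plan is to induct on the number of red-blue edges of the \hs\ and, at each stage, to run the very same domination argument that works in \np, replacing the \np\ ordering of numbers by the \mpl\ ordering of star-based positions supplied by Theorem \ref{thm:orderedsprigs}. Write the \hs\ as $\cgstar\cgc s$, where $s$ is its red-blue String with \np\ value $x$. Cutting the ground (green) edge sends the position to $0$ for either player, while a Left (respectively Right) cut of a blue (red) edge leaves a shorter \hs\ $\cgstar\cgc s'$, where $s'$ is a truncation of $s$. By the induction hypothesis each such follower is already in canonical form $\cgstar\cgc v$, with $v$ the \np\ value of $s'$; so the options of $\cgstar\cgc s$ are $0$ together with the positions $\cgstar\cgc v$ as $v$ ranges over the values of the Left/Right truncations of $s$.

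I would first dispose of the non-extreme truncation options. As noted above, in \np\ the String reduces to $x=\combgame{\{x^L\mid x^R\}}$ with every cut dominated except those realizing $x^L$ and $x^R$; equivalently $x^L$ is the largest value of a Left truncation and $x^R$ the smallest value of a Right truncation. Theorem \ref{thm:orderedsprigs} shows that the positions $\cgstar\cgc v$ are totally ordered in exactly the order of the numbers $v$, so the identical options are dominated here: every Left option $\cgstar\cgc v$ with $v<x^L$ satisfies $\cgstar\cgc v\lessdot \cgstar\cgc x^L$, and symmetrically on the Right. (The inequalities needed are those of Theorem \ref{thm:miserecolon1}, which hold in every universe, and the strictness passes from $\mathcal{S}$ to $\mathcal{D}$ by the opening lemma, so the reductions are valid mod $\mathcal{D}$.) This leaves the candidate options $\combgame{\{0,\cgstar\cgc x^L\mid 0,\cgstar\cgc x^R\}}$, which is precisely the game tree of $\cgstar\cgc x$.

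It remains to check that this form is genuinely canonical, namely that the two surviving $0$'s are not removed and that nothing reverses, and this is where the \mpl\ argument departs from the \np\ one. The key point is that $0$ is incomparable (mod $\mathcal{D}$) to each star-based option: using $\cgstar\cgc x^L+\cgstar\cgc\overline{x^L}\equiv 0$ from Corollary \ref{cor:starcolonconjugate} together with Theorem \ref{thm:starcolon}, one produces a test game on which $o^-(\cgstar\cgc x^L+X)$ and $o^-(0+X)$ fall in the incomparable classes $\N$ and $\P$. Hence $0$ dominates none of the options and is dominated by none, so both $0$'s survive. For reversibility, the only Right option of the Left option $\cgstar\cgc x^L$ other than $0$ is $\cgstar\cgc(x^L)^R$, and since $(x^L)^R>x$ for a number in canonical form, Theorem \ref{thm:orderedsprigs} gives $\cgstar\cgc(x^L)^R\gtrdot\cgstar\cgc x$, so it cannot reverse; the option $0$ has no Right option and so cannot reverse either. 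The symmetric statements handle the Right options, and the integer cases (where one side offers only $0$) are immediate.

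I expect the main obstacle to be the incomparability-of-$0$ step: it is exactly the place where \mpl\ behaves differently from \np, and it must be established mod $\mathcal{D}$ rather than merely mod $\mathcal{S}$, so some care is needed to exhibit a distinguishing game inside the universe, which is where the invertibility of star-based positions (Corollary \ref{cor:starcolonconjugate}) is indispensable. Everything else is bookkeeping that mirrors the \np\ reduction, carried over through Theorem \ref{thm:orderedsprigs}.
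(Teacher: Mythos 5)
Your proof follows the paper's route: red/blue options are ordered and dominated exactly as in \np\ via Theorem \ref{thm:orderedsprigs} (with the non-strict inequalities valid in every universe by Theorem \ref{thm:miserecolon1}), the green option $0$ is shown incomparable with star-based positions by a test game whose outcomes land in the incomparable classes $\P^-$ and $\N^-$, and then reversibility is checked. The genuine gap is inside the reversibility check. The Left option $\cgstar\cgc x^L$ has two kinds of Right options: $\cgstar\cgc (x^L)^R$ (when $x^L$ has a Right option) and $0$ (Right chopping the green edge). You dispose of the first, but your clause ``the option $0$ has no Right option and so cannot reverse either'' only shows that the \emph{Left option} $0$ of the Sprig is not reversible (it has no Right options to reverse through); it does not address whether $\cgstar\cgc x^L$ reverses \emph{through} its Right option $0$, which requires knowing $0\not\leqq \cgstar\cgc x\pmod{\mathcal{D}}$, a comparison of $0$ with the whole Sprig, not with its options. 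This case is not vacuous, and it is not covered by your ``immediate'' integer cases: for $x=2$ the surviving Left option is $\cgstar\cgc 1=\combgame{\{0,\cgstar\,|\,0\}}$, whose \emph{only} Right option is $0$, so for integer Sprigs the entire Left reversibility question rests on exactly the comparison you skipped; if $0\leqq\cgstar\cgc 2$ held, the option $\cgstar\cgc 1$ would be bypassed and the canonical form would not be $\cgstar\cgc 2$.

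The repair is one sentence, and it is what the paper writes (``if Right responds by chopping the green edge, we are at $0$ which is incomparable with the original''): your incomparability argument applies verbatim with the Sprig itself in place of its options, since for any number $g\neq 0$ the single test game $\cgstar$ gives $o^-(0+\cgstar)=\P$ while $o^-(\cgstar\cgc g+\cgstar)=\N$, and $\P$, $\N$ are incomparable; now take $g=x$. Two lesser remarks. First, your test game manufactured from Corollary \ref{cor:starcolonconjugate} does work, but $\cgstar$ alone suffices, as above, which is the paper's choice. Second, your closing concern points the wrong way: by the paper's opening lemma, a distinguishing (hence incomparability-witnessing) game exhibited in $\mathcal{S}$ already distinguishes in $\mathcal{D}$ and in the full mis\`ere universe, so no extra care is needed there; it is the \emph{non-strict} inequalities used for domination that do not automatically pass from $\mathcal{S}$ to larger universes, and those you correctly obtain from Theorem \ref{thm:miserecolon1}.
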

\begin{proof}
In the red-blue String corresponding to $x$, removing the top blue (red) edge is the best move in \np\ for Left (Right); any other move is strictly dominated. By  Theorem \ref{thm:orderedsprigs}, the same holds true in \mpl\ Sprigs for red or blue moves. Removing the green edge leaves 0 which is incomparable with $\cgstar \cgc g$ if $g\not = 0$ because $\cgstar + 0\in \P^-$ and $\cgstar+\cgstar \cgc g\in \N^-$.
For reversibility, we assume without loss of generality that Left is moving first. If Right chops a red edge in response, the result is not less than the original position; if Right responds by chopping the green edge, we are at $0$ which is incomparable with the original.
Therefore the canonical form of the \hs\ is $\combgame{\{0,\cgstar\cgc x^L|0,\cgstar\cgc x^R\}}=\cgstar\cgc x$.
\end{proof}

\section{Outcomes of Sums}\label{sec:miseresums}

In this section, we give the \mpl\ outcomes for disjunctive sums of \hs s positions. 
If $X$ and $Y$ are multisets of positive numbers then let $(X,Y)=\sum_{x\in X}\cgstar\cgc x + \sum_{y\in Y}\cgstar\cgc -y$.
 We call $G=(X,Y)$ \emph{reduced} if $X \cap Y = \emptyset$.
Note that for any game $H$, $\overline{\cgstar\cgc H}$ is the same game as $\cgstar\cgc \Hb$.
 In \np\ $\cgstar\cgc H +\cgstar\cgc \Hb = 0$; Corollary \ref{cor:starcolonconjugate} shows this is also true in \mpl\  in the dicot universe.
Thus, given a position $G = (X,Y)$, there is a \emph{unique reduced} position $G' = (X',Y')$, where $X' = X \setminus Y$ and $Y' = Y \setminus X$.
 
 We define the \textit{advantage} of $G=(X,Y)$ to be $\Delta(G)=|X|-|Y|$ (which is the same as $|X'|-|Y'|$). We define the \textit{edge} of $G$ to be $\epsilon(G)=\min(X') - \min(Y')$. If $X'$ or $Y'$ is empty then we take $ \epsilon(G)=0$.

\begin{lem}\label{zerozero}
If $G = (X,Y)$ with $\Delta(G) = 0$ and $\epsilon(G) = 0$, then $X=Y$ and $G\equiv 0\pmod{\mathcal{D}}$.
\end{lem}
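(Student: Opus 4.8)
The statement has two parts: the purely combinatorial claim that $X = Y$ as multisets, and then the game-theoretic claim that $G \equiv 0 \pmod{\mathcal{D}}$. The plan is to settle the combinatorial claim first and then feed $X = Y$ into a pairing argument built on Corollary \ref{cor:starcolonconjugate}.

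For $X = Y$, I would first record that $\Delta(G) = 0$ gives $|X| = |Y|$. Since for multisets $|X| = |X \cap Y| + |X'|$ and $|Y| = |X \cap Y| + |Y'|$ (with $\cap$ the multiset intersection and $X' = X\setminus Y$, $Y' = Y \setminus X$ the multiset differences), this forces $|X'| = |Y'|$. The key observation is then that $X'$ and $Y'$ are disjoint: a value of multiplicity $m$ in $X$ and $n$ in $Y$ contributes $\max(m-n,0)$ to $X'$ and $\max(n-m,0)$ to $Y'$, and at most one of these is positive. Now I would use $\epsilon(G) = 0$. By definition this happens either when one of $X', Y'$ is empty (by convention), or when both are nonempty and $\min(X') = \min(Y')$. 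The latter is impossible, because a common minimum would be a value lying in both $X'$ and $Y'$, contradicting disjointness. Hence $\epsilon(G) = 0$ forces at least one of $X', Y'$ to be empty, and combined with $|X'| = |Y'|$ both are empty; therefore $X = Y$.

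For $G \equiv 0 \pmod{\mathcal{D}}$, with $X = Y$ in hand I would pair the summands, writing $G = \sum_{x \in X}(\cgstar\cgc x + \cgstar\cgc -x)$. Since $-x = \overline{x}$ for a number $x$, Corollary \ref{cor:starcolonconjugate} gives $\cgstar\cgc x + \cgstar\cgc -x \equiv 0 \pmod{\mathcal{D}}$ for each term. Finally I would invoke closure of $\mathcal{D}$ under disjunctive sums to conclude that indistinguishability modulo $\mathcal{D}$ is additive: if $A \equiv 0$ and $C \equiv 0 \pmod{\mathcal{D}}$, then for any $X \in \mathcal{D}$ we have $o^-(A + C + X) = o^-(C + X) = o^-(X)$ (applying the two congruences with the test games $C + X \in \mathcal{D}$ and $X \in \mathcal{D}$ respectively), so $A + C \equiv 0$. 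Iterating over the terms of $G$ then finishes the proof.

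I expect the combinatorial claim to be the only place needing real care: the convention ``$\epsilon(G) = 0$ if $X'$ or $Y'$ is empty'' overloads the value $0$, so the argument must explicitly rule out the ``both nonempty with equal minima'' route via disjointness rather than treating $\epsilon(G) = 0$ as a single clean condition. The game-theoretic half is essentially bookkeeping once Corollary \ref{cor:starcolonconjugate} and the additivity of $\equiv \pmod{\mathcal{D}}$ are in place.
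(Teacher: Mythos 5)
Your proof is correct and follows essentially the same route as the paper's, whose entire argument is the two sentences that $\Delta(G)=0$ gives $|X'|=|Y'|$ and that $\epsilon(G)=0$ forces at least one (hence both) of $X'$, $Y'$ to be empty. The details you add---the disjointness of $X'$ and $Y'$ ruling out the ``equal nonempty minima'' case, and the pairing via Corollary \ref{cor:starcolonconjugate} together with additivity of $\equiv \pmod{\mathcal{D}}$ to conclude $G\equiv 0$---are precisely the steps the paper leaves implicit, the latter having been set up in the paragraph immediately preceding the lemma.
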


\begin{proof}
As $\Delta(G) = 0$, $|X| = |Y|$ and $|X'| = |Y'|$. As $\epsilon(G)=0$, at least one of $X'$ or $Y'$ must be empty, so they both must be as they are the same size.
\end{proof}

A \hs s position is of the form $(X,Y)$ or $(X,Y) + \cgstar$. The advantage and edge of a position are sufficient to determine the outcome of the position.

\begin{thm}\label{thm:outcomenostar}
If $G=(X,Y)$ then 
\[ o^-(G)=
\begin{cases}
\L \mbox{ if } \Delta(G)>0;\\
\R \mbox{ if } \Delta(G)<0;\\
\N \mbox{ if } \Delta(G)=0.
\end{cases}
\]
\end{thm}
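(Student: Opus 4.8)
The plan is to prove the three cases by combining the ordering results from Section 2 with the key cancellation fact from Corollary \ref{cor:starcolonconjugate}. The fundamental idea is that $G = (X,Y)$ reduces to its reduced form $G' = (X',Y')$ modulo $\mathcal{D}$, since each cancelling pair $\cgstar\cgc x + \cgstar\cgc \overline{x} \equiv 0 \pmod{\mathcal{D}}$, and outcomes are preserved under $\equiv \pmod{\mathcal{D}}$ because $\mathcal{S} \subset \mathcal{D}$. Moreover $\Delta(G) = \Delta(G')$, so it suffices to analyze the reduced position, where one of $X'$ or $Y'$ is empty (or both are, giving $G \equiv 0$).

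First I would handle the case $\Delta(G) = 0$. By the reasoning above, $G \equiv G' \pmod{\mathcal{D}}$, and since $\Delta(G') = 0$ forces $|X'| = |Y'|$, I want to show the reduced position is in $\N^-$. If both $X'$ and $Y'$ are empty then $G \equiv 0 \pmod{\mathcal{D}}$, and $o^-(0) = \N$. Otherwise $|X'| = |Y'| \geq 1$ with both nonempty, and here the symmetry between Left's and Right's positions should make this a Next-player win: the first player can move in a summand to create an imbalance (or, more carefully, can exploit that the position is self-conjugate-like and that $o^-$ of a position equal to its own negative modulo a universe containing $0$ tends to be $\N$). The cleanest route is probably to exhibit an explicit first-player winning move, moving some $\cgstar\cgc x_i$ to $0$ to tip $\Delta$ strictly positive (for Left) or negative (for Right), then invoke the $\Delta \neq 0$ cases inductively.

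For $\Delta(G) > 0$ I would show $o^-(G) = \L$. Since $\Delta(G') > 0$ and $G'$ is reduced, $Y'$ is smaller than $X'$; after cancellation one expects $G \equiv \sum \cgstar\cgc x_i \pmod{\mathcal{D}}$ with strictly more positive summands. The heart is a Left-strategy argument: Left must win moving both first and second. When Left moves second, she responds to each Right move so as to preserve $\Delta > 0$ (using Theorem \ref{thm:starcolon} and the ordering $\cgstar\cgc x \gtrdot \cgstar\cgc y$ from Theorem \ref{thm:orderedsprigs} to guarantee dominance), and the surplus positive summands guarantee she makes the last move. The case $\Delta(G) < 0$ follows by the conjugate symmetry, replacing Left by Right and $x$ by $\overline{x}$.

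\textbf{The main obstacle} I expect is the $\Delta = 0$, both-sides-nonempty subcase: establishing rigorously that it lies in $\N^-$ rather than $\P^-$. Unlike the strictly-positive or strictly-negative cases, where an advantage in one player's favor drives the strategy, the balanced case requires showing that whoever moves first can break the balance to their own advantage and then win the resulting strictly-signed position. The delicate point is that moving in a summand does not simply delete it but can leave a smaller star-based number, so I would want to argue that the first player can always move some $\cgstar\cgc x$ directly to $0$ (chopping the green edge), thereby removing a summand cleanly and shifting $\Delta$ by one in their favor, after which the $\Delta \neq 0$ case and an induction on the number of summands finish the argument.
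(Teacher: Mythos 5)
Your proposal has a genuine gap, and it begins at the reduction step. \emph{Reduced} means $X' \cap Y' = \emptyset$, not that one of $X'$, $Y'$ is empty: Corollary \ref{cor:starcolonconjugate} only cancels pairs $\cgstar\cgc x + \cgstar\cgc \overline{x}$ with the \emph{same} value $x$. For example, $(\{1,1\},\{2\}) = \cgstar\cgc 1 + \cgstar\cgc 1 + \cgstar\cgc \overline{2}$ is already reduced, has $\Delta = 1$, and is not equivalent modulo $\mathcal{D}$ to any sum of positive sprigs (reduced forms are unique, as the paper's Section 4 shows). So your treatment of $\Delta(G)>0$, which rests on ``$G \equiv \sum_i \cgstar\cgc x_i \pmod{\mathcal{D}}$ with all summands positive,'' simply does not cover the general mixed-sign case; and that case is genuinely different, since Right then has the additional option of chopping his own sprigs, which your strategy never has to answer. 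The paper needs no reduction at all for this theorem: its induction works directly on $(X,Y)$.

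Even in the all-positive case, your second-player argument for Left is flawed. ``The surplus positive summands guarantee she makes the last move'' is the normal-play winning criterion; under \mpl\ the player who makes the last move \emph{loses} --- the whole point (Theorem \ref{thm:starcolon}) is to force the opponent to chop the last green edge. Likewise, ``respond so as to preserve $\Delta > 0$'' is not a strategy Left can always follow, nor is it obviously winning: Left's correct replies often \emph{decrease} $\Delta$ (e.g., chopping one of her own sprigs to leave a single positive sprig with Right to move). What is needed --- and what the paper does --- is a case analysis of Right's move, with induction on the number of sprigs: if Right chops a green edge, then $G^R$ is a smaller position with $\Delta(G^R)\geq 0$, so $G^R\in\N^-\cup\L^-$ by induction and Left, moving next, wins; if Right instead moves inside a number part, $\Delta$ is unchanged and Left wins by falling back on the first-player argument. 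That first-player argument (chop a green edge to tip $\Delta$ in your favor, then induct) is the part of your proposal that is correct and it coincides with the paper's; note only that to tip $\Delta$ positive Left must chop one of \emph{Right's} sprigs $\cgstar\cgc\overline{y}$, not one of her own $\cgstar\cgc x_i$, since removing her own summand pushes $\Delta$ the wrong way and by induction hands the win to Right.
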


\begin{proof}
Let $G=(X,Y)$. We proceed by induction on $|X|+|Y|$.
If $|X|+|Y|=0$ then $G=0$ which is a next-player win. If $|X|=1$ and $|Y|=0$ then $G\in\L^-$ by Theorem \ref{thm:starcolon}.

Suppose $|X|=|Y|>0$.
As $|Y| > 0$, Left going first can move some $\cgstar\cgc \overline{y}$ to $0$; the resulting position, $G^L$, has $\Delta(G^L) >0$ and so Left wins by induction.
Similarly, can win Right moving first. Thus, if $\Delta(G) = 0$, then $G \in \N^-$.

Suppose $|X|>|Y|$. If $|Y| > 0$, then Left wins going first as above.
If $|Y|=0$ then we need only consider $|X|>1$. Left wins going first by moving $\cgstar \cgc x$ to $0$ for some $x \in X$, which is a winning move by induction. If Right moves first to $G^R$ such that $\Delta(G^R) \ge 0$, then Left wins because $G^R \in \N^-\cup \L^-$; if Right's move does not change $\Delta$, then Left wins using the first-player argument previously given.

Omitted arguments for $|X| < |Y|$ are similar.
\end{proof}

\begin{lem}\label{lem:withstargoingfirst}
  Let $G = (X,Y)$ and consider $G+\cgstar$. If $\Delta(G)>0$ then
  \begin{itemize}
  \item Left can win by playing first;
  \item if Right can win playing first, he can do so moving $\cgstar \cgc x$ to $0$ where $x = \max(X')$.
  \end{itemize}
\end{lem}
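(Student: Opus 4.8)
The plan is to treat the two bullets separately, since the first is immediate and all the work is in the second. For the first bullet I would simply have Left collapse the extra $\cgstar$ to $0$, reaching $G$ with Right to move. Since $\Delta(G)>0$, Theorem~\ref{thm:outcomenostar} gives $G\in\L^-$, so Left wins no matter whose turn it is; in particular Left wins this continuation. This invokes nothing beyond Theorem~\ref{thm:outcomenostar}.

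For the second bullet I would first reduce to the case that $G$ is reduced. Because $\cgstar\cgc x+\cgstar\cgc\overline{x}\equiv 0\pmod{\mathcal{D}}$ (Corollary~\ref{cor:starcolonconjugate}), cancelling the matching pairs gives $G+\cgstar\equiv(X',Y')+\cgstar$, and both the outcome and the designated move (the collapse of $\cgstar\cgc\max(X')$) are preserved by this cancellation, so I may assume $X\cap Y=\emptyset$. Working in canonical form (Theorem~\ref{thm:numbersarenumbers}), each Sprig $\cgstar\cgc x$ offers Right only the moves to $0$ or to $\cgstar\cgc x^R$, and the extra $\cgstar$ offers only the move to $0$. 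I would then classify Right's options as: collapsing a positive Sprig; reducing a positive Sprig to $\cgstar\cgc x^R$ with $x^R>x>0$; collapsing or reducing a negative Sprig (noting $(-y)^R\le 0$ for a negative number); and collapsing the resident $\cgstar$.

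The key claim is that every move other than collapsing a positive Sprig is a loss for Right, because after any such move it is Left's turn in a position again of the form $(\cdot,\cdot)$ or $(\cdot,\cdot)+\cgstar$ whose advantage is still positive. Indeed, collapsing the resident $\cgstar$ leaves $G$ ($\Delta>0$); reducing a positive Sprig leaves the advantage unchanged; and collapsing or reducing a negative Sprig does not decrease the advantage, where if reducing a negative Sprig produces $\cgstar\cgc 0=\cgstar$ I pair it with the resident $\cgstar$ and cancel via Corollary~\ref{cor:star+star}. In each case the first bullet (or Theorem~\ref{thm:outcomenostar} directly, when no $\cgstar$ survives) shows the player now to move, namely Left, wins. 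It then remains to compare the surviving candidates, the positive-Sprig collapses. For any positive Sprig $x_i\le\max(X')$, collapsing $\cgstar\cgc\max(X')$ and collapsing $\cgstar\cgc x_i$ yield positions differing only in that the former retains $\cgstar\cgc x_i$ where the latter retains $\cgstar\cgc\max(X')$; since $x_i\le\max(X')$, Theorem~\ref{thm:orderedsprigs} and additivity of $\leqq\pmod{\mathcal{S}}$ make the $\max(X')$-collapse the $\leqq$-smaller position. As the classes $\{\P,\R\}$ (in which the player to move loses) form a down-set, if collapsing $\cgstar\cgc x_i$ wins for Right then so does collapsing $\cgstar\cgc\max(X')$; combined with the previous sentence, whenever Right has any winning first move he has the one named in the statement.

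I expect the main obstacle to be the bookkeeping of the second bullet: verifying that each non-collapse move, and each move in a negative Sprig, genuinely reduces to the first bullet, in particular handling the stray $\cgstar$ created when a negative Sprig is reduced to $0$. The reduction to reduced positions via Corollary~\ref{cor:starcolonconjugate} is the load-bearing simplification, since it is what lets the single value comparison of Theorem~\ref{thm:orderedsprigs} cover all positive-Sprig collapses uniformly; without it a collapsed Sprig could exceed $\max(X')$ and the clean domination would break.
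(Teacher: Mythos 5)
Your proof is correct and follows essentially the same route as the paper: the first bullet by collapsing the free $\cgstar$ and invoking Theorem \ref{thm:outcomenostar}, and the second by arguing that any move of Right other than eliminating a positive Sprig leaves Left to move in a winning position, then using Theorem \ref{thm:orderedsprigs} to conclude that among the eliminations the removal of $\cgstar\cgc\max(X')$ is best for Right. The only differences are ones of explicitness, in your favor: you spell out the reduction to reduced form, the negative-Sprig moves (including the stray $\cgstar$ when $(-y)^R=0$), and the down-set step converting the $\leqq$-ordering of Right's options into an outcome comparison, all of which the paper leaves implicit (it instead dispatches non-eliminating moves uniformly via $\cgstar\cgc x^R \gtrdot \cgstar\cgc x$).
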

\begin{proof}
  Playing in $G + \cgstar$, if $\Delta(G)>0$ then Left can win by playing first by moving $\cgstar$ to $0$, leaving $G$ which is winning by Theorem \ref{thm:outcomenostar}.

If Right does not move a \hs\ to 0, then he must play in some $\cgstar \cgc x$ to $\cgstar \cgc x^R$. As $x^R > x$, Theorem \ref{thm:orderedsprigs} then says that $\cgstar \cgc x^R \gtrdot \cgstar \cgc x$. By the definition of $\gtrdot$, the new position is better for Left and so Left wins playing first from this position.

Right can only win by eliminating a \hs. By Theorem \ref{thm:orderedsprigs} Right's options, such as $G + \cgstar - \cgstar \cgc x_1$, are ordered and so Right eliminates the \hs\ that is best for the opponent.
\end{proof}

\begin{thm}\label{thm:outcomewithstar}
If $G=(X,Y)$ then \[ o^-(G +\cgstar) =
\begin{cases}
\L \mbox{ if } \Delta(G)>1 \mbox{ or }  \Delta(G)=0,1 \mbox{ and } \epsilon(G)>0;\\
\R \mbox{ if } \Delta(G)<-1 \mbox{ or }  \Delta(G)=0,-1\mbox{ and } \epsilon(G)<0;\\
\N \mbox{ if } \Delta(G)=1  \mbox{ and } \epsilon(G)\leq 0 \mbox{ or } \Delta(G)=-1 \mbox{ and } \epsilon(G)\geq 0;\\
\P \mbox{ if } \Delta(G)=0  \mbox{ and } \epsilon(G)=0.\\
\end{cases}
\]
\end{thm}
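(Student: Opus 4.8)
The plan is to collapse the four-way classification into a single question: \emph{for which values of $(\Delta(G),\epsilon(G))$ does Left have a winning first move in $G+\cgstar$?} Conjugation sends $G=(X,Y)$ to $\Gb=(Y,X)$, hence $(\Delta,\epsilon)\mapsto(-\Delta,-\epsilon)$, interchanges the two players, and fixes $\cgstar$; so ``Right wins $G+\cgstar$ moving first'' is exactly the same assertion applied to $\Gb+\cgstar$. Once the Left-moving-first question is answered as a function of $(\Delta,\epsilon)$, the outcome follows from the dictionary $o^-=\L^-$ iff Left wins first and Right does not, $o^-=\N^-$ iff both do, $o^-=\P^-$ iff neither does, and $o^-=\R^-$ dually. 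Reading this against the stated table, the one fact I must establish is that Left wins $G+\cgstar$ moving first if and only if $\Delta(G)\ge 1$, or $\Delta(G)=0$ and $\epsilon(G)>0$, or $\Delta(G)=-1$ and $\epsilon(G)\ge 0$.

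I would prove this by induction on the total complexity of $G$, taken to be the number of Sprigs plus the sum of the birthdays of their number-values; this quantity strictly drops under every move except chopping the extra $\cgstar$. A move in $G+\cgstar$ is of one of three kinds: (a) $\cgstar\to 0$, leaving $G$, whose outcome is supplied by Theorem~\ref{thm:outcomenostar}; (b) collapsing a Sprig to $0$, removing an element of $X$ or $Y$; or (c) replacing some $\cgstar\cgc x$ by $\cgstar\cgc x^L$, or $\cgstar\cgc\overline{y}$ by $\cgstar\cgc\overline{y^R}$, i.e.\ decreasing one value. Kinds (b) and (c) leave a position of the form $(\cdot,\cdot)+\cgstar$ of smaller complexity, so the inductive hypothesis determines its outcome from its own advantage and edge. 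Two inputs are already available: $\Delta(G)\ge 1$ gives Left a winning first move by Lemma~\ref{lem:withstargoingfirst}, and $\Delta(G)=0=\epsilon(G)$ gives $G\equiv 0\pmod{\mathcal{D}}$ by Lemma~\ref{zerozero}, whence $G+\cgstar\equiv\cgstar\in\P^-$ and neither player wins first.

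For the remaining winning cases I exhibit the move. When $\Delta(G)=-1,\ \epsilon(G)\ge 0$, or $\Delta(G)=0,\ \epsilon(G)>0$ with at least two reduced Sprigs on the right, Left collapses a Sprig realizing $\max(Y')$; this leaves $\min(Y')$, and hence $\epsilon$, unchanged while raising $\Delta$ by one, landing in a class ($\Delta=0,\epsilon\ge 0$ or $\Delta=1,\epsilon>0$) certified as $\P^-$ or $\L^-$ by the inductive hypothesis, so Right, now to move, loses. The genuinely delicate case is a single reduced pair, $\cgstar\cgc a+\cgstar\cgc\overline{b}+\cgstar$ with $a>b>0$, where collapsing either Sprig lands in $\N^-$, so Left must instead decrease a value. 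Here I use the simplicity dichotomy for $a>b$: either $a^L\ge b$ or $b^R\le a$, since if both failed then $b$ would lie in the open interval $(a^L,a^R)$ and $a$ in $(b^L,b^R)$, forcing each of $a,b$ to be born strictly later than the other. Taking whichever branch holds, Left moves $a\to a^L$ or $\overline{b}\to\overline{b^R}$ to a position whose two values $c$ satisfy $b\le c\le a$; this child has smaller complexity and is either a cancelling pair (then $\equiv\cgstar\in\P^-$ by Corollary~\ref{cor:starcolonconjugate}) or again a single reduced pair with positive edge (then $\L^-$ by induction). Either way the child is a Left or previous-player win, so Left wins moving first. I expect this single-pair analysis to be the principal obstacle of the proof.

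It remains to show that in the loss cases ($\Delta\le -2$; $\Delta=0,\ \epsilon<0$; $\Delta=-1,\ \epsilon<0$) every Left first move is answered. Moves of kinds (a) and (b) are dispatched by Theorem~\ref{thm:outcomenostar} and the inductive hypothesis once one checks they cannot lift $\epsilon$ into the winning range. The remaining difficulty is kind (c): decreasing a value can trigger a cancellation and thus shift the reduced multisets $X',Y'$. The point to verify—the secondary obstacle in the bookkeeping—is that a value-decreasing Left move keeps $\Delta$ fixed and cannot raise $\min(X')$ nor lower $\min(Y')$ (a cancellation can only discard the smaller of two matched minima), so $\epsilon$ does not cross $0$ and, by the inductive hypothesis, the child remains a Right win; Theorem~\ref{thm:orderedsprigs} packages this cleanly, as such a move only sends the position downward in the $\mathcal{S}$-order. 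Assembling the winning and losing analyses and invoking the conjugation symmetry of the first paragraph then yields the stated table in full.
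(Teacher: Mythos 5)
Your proposal is correct, and while it shares the paper's high-level skeleton (pass to the reduced form, induct, invoke Lemma~\ref{lem:withstargoingfirst} when $\Delta\ge 1$ and Theorem~\ref{thm:outcomenostar} for the move on the extra $\cgstar$), it takes a genuinely different route at precisely the two places you flag as the main obstacles. The paper inducts only on the number of \hs s; it disposes of the crucial single-pair case $\cgstar\cgc a+\cgstar\cgc\overline{b}+\cgstar$ by a strategic argument---the first player to annihilate any summand loses, so play proceeds in the number parts exactly as in \np\ on $a-b$, giving $o^-(H)=o^+(\epsilon(G))$ directly---and it dismisses other within-sprig moves wholesale via the ordering result (Theorem~\ref{thm:orderedsprigs}, packaged in Lemma~\ref{lem:withstargoingfirst}). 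You instead refine the induction measure (sprig count plus birthdays) so that value-decreasing moves fall under the inductive hypothesis, which obliges you to (i) prove the simplicity dichotomy ($a>b$ implies $a^L\ge b$ or $b^R\le a$) to exhibit Left's winning move in the single-pair case, and (ii) track $(\Delta,\epsilon)$ through every move type, including the cancellation bookkeeping. Both routes are sound: your dichotomy argument is valid (if it failed, each of $a,b$ would lie strictly inside the other's defining interval, contradicting the simplicity rule), and your monotonicity claim---that a value-decreasing Left move from a reduced position never raises $\min(X')$ nor lowers $\min(Y')$, since a triggered cancellation can only delete a $Y'$-element that is $\ge\min(Y')$ and an $X'$-element that exceeds $\min(X')$---checks out, so $\epsilon$ indeed cannot climb into the winning range. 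The trade-off: the paper's argument is shorter and explains conceptually why $o^+(\epsilon)$ appears, but its ``play proceeds until someone is forced'' step and its terse handling of non-collapsing moves are exactly the spots your bookkeeping makes rigorous. One small gloss to repair in your plan: a move $x\to x^L=0$ turns that sprig into a bare $\cgstar$, so the child is $(X\setminus\{x\},Y)+\cgstar+\cgstar$ rather than of the form $(\cdot,\cdot)+\cgstar$; this is harmless because $\cgstar+\cgstar\equiv 0\pmod{\mathcal{D}}$ reduces it to a position covered by Theorem~\ref{thm:outcomenostar}, but it should be said.
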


\begin{proof}
The outcome is the same as the outcome of the reduced game, so assume $G=(X,Y)$ is reduced and let $H = G + \cgstar$. We proceed by induction on $|X|+|Y|$. We focus on Left; omitted arguments for Right are similar.
\begin{itemize}
\item If $|X|+|Y|=0$ then $H = \cgstar$ which is a previous-player win.
\item If $|X|+|Y|=1$ then $H\in\N^-$ as either player wins by moving to $\cgstar$.
\item If $|X|=|Y|=1$, then $H = \cgstar \cgc x + \cgstar \cgc \overline{y} +\cgstar$. The first player to move any summand to $0$ loses; play proceeds in $x$ and $\overline{y}$ until someone is forced to do so, at which point the opponent responds by eliminating a second \hs, leaving a \hs\ that is at least as good for them as $\cgstar$. In particular, $o^-(H) = o^+(x-y) = o^+(\epsilon(G))$. 
\item If $|X|=|Y|>1$ and $\epsilon(G) > 0$, then Left wins going first in $H$ by moving one of Right's \hs s to $0$, changing her advantage to $1$. By Lemma \ref{lem:withstargoingfirst}, $H^L$ is winning for Left moving first; Right must respond by moving one of Right's \hs s to $0$, leaving $H^{LR}$. This position is in $\L^-$ by induction as both Left and Right will have removed the opponent's best Sprig and the edge has not changed.
 As $H$ is a win for Left going first, Right's must respond moving one of Left's \hs s to $0$, to which Left responds to $G^{RL} = G^{LR}$, a winning move.
\item If $|X|>|Y|$ then Left can win going first by Lemma \ref{lem:withstargoingfirst}. Right's best move going first is to move a sprig to $0$. If $|X|-|Y|>2$, Right's move loses. However, if $|X|-|Y|=1$ and $\epsilon(G) <0$, then Right wins by induction.
\end{itemize}
\end{proof}
\section{Distinguishability and  Monoid}

In \np, any given game $G$ is equivalent to many other games. It is generally true that in \mpl\ there are few games equivalent to (indistinguishable from) $G$. In impartial \mpl, `moding out' by the indistinguishablity relations leads to a monoid that replaces the nimbers of \np. This was part of the Plambeck's breakthrough (\citep{PlambS2008}). 
It is of less use here but we present the monoid for completeness.

\begin{lem} 
Let $G=(X_1,Y_1)$ and $H =(X_2,Y_2)$ be reduced games. If $\delta\in \{0,1\}$, then $G + \delta \cdot \cgstar \equiv H + \delta\cdot \cgstar\pmod{\mathcal{D}}$  if and only if $X_1=X_2$ and $Y_1=Y_2$.
\end{lem}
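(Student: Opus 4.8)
The forward implication is immediate: if $X_1=X_2$ and $Y_1=Y_2$ then $G$ and $H$ are literally the same position, so certainly $G+\delta\cgstar\equiv H+\delta\cgstar\pmod{\mathcal{D}}$. The plan for the converse is to prove the contrapositive, reducing the comparison of two arbitrary reduced games to the comparison of a single reduced game against $\delta\cgstar$.

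First I would exploit that $\equiv\pmod{\mathcal{D}}$ respects addition: since $\overline{H}$ is a sum of \hs s (hence lies in $\mathcal{D}$) and $H+\overline{H}\equiv 0\pmod{\mathcal{D}}$ by iterating Corollary \ref{cor:starcolonconjugate}, adding $\overline{H}$ to both sides gives
\[
G+\delta\cgstar\equiv H+\delta\cgstar \pmod{\mathcal{D}}
\quad\Longleftrightarrow\quad
D+\delta\cgstar\equiv\delta\cgstar\pmod{\mathcal{D}},
\]
where $D$ is the reduced form of $G+\overline{H}=(X_1\uplus Y_2,\;Y_1\uplus X_2)$ (a position being indistinguishable from its reduced form). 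A short multiset computation — using that $G$ and $H$ are reduced, so $X_1\cap Y_1=\emptyset$ and $X_2\cap Y_2=\emptyset$ — shows that $D=0$, i.e.\ both parts of $D$ are empty, exactly when $X_1\uplus Y_2=Y_1\uplus X_2$ as multisets, and that this last equality forces $X_1=X_2$ and $Y_1=Y_2$. Thus it suffices to prove the key claim: for a reduced game $D\neq 0$ one has $D+\delta\cgstar\not\equiv\delta\cgstar\pmod{\mathcal{D}}$.

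To establish the key claim I would distinguish $D+\delta\cgstar$ from $\delta\cgstar$ using one of the two simplest test games, $Z=0$ or $Z=\cgstar$, both in $\mathcal{S}\subset\mathcal{D}$. Since $D$ is reduced and nonzero, Lemma \ref{zerozero} guarantees that either $\Delta(D)\neq 0$, or $\Delta(D)=0$ and $\epsilon(D)\neq 0$. The case analysis then merely reads off outcomes, recalling that $o^-(0)=\N$ and $o^-(\cgstar)=\P$ (Theorem \ref{thm:starcolon}). If $\Delta(D)\neq 0$, then for $\delta=0$ Theorem \ref{thm:outcomenostar} gives $o^-(D)\in\{\L,\R\}\neq\N$, and for $\delta=1$ Theorem \ref{thm:outcomewithstar} gives $o^-(D+\cgstar)\neq\P$ since the $\P$-outcome there requires $\Delta=0$; in both subcases $Z=0$ works. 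If instead $\Delta(D)=0$ and $\epsilon(D)\neq 0$, then Theorem \ref{thm:outcomewithstar} gives $o^-(D+\cgstar)\in\{\L,\R\}\neq\P=o^-(\cgstar)$, so $Z=0$ distinguishes when $\delta=1$ and $Z=\cgstar$ distinguishes when $\delta=0$. In every case the chosen $Z$ separates $D+\delta\cgstar$ from $\delta\cgstar$, proving the key claim and hence the lemma.

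The genuinely delicate points are bookkeeping rather than strategic. The main thing to get right is the validity of the reduction — that $\equiv\pmod{\mathcal{D}}$ is additive and that a position is indistinguishable from its reduced form, so that passing to $D$ is legitimate — together with the multiset cancellation step, where the reducedness hypotheses $X_i\cap Y_i=\emptyset$ are precisely what rule out a spurious coincidence $X_1\uplus Y_2=Y_1\uplus X_2$ with $X_1\neq X_2$. Once the reduction is in place, the outcome theorems do all the remaining work and no new strategy argument is needed.
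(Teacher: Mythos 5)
Your proposal is correct and takes essentially the same route as the paper: unwinding your additivity reduction, your test games $Z=0$ and $Z=\cgstar$ correspond exactly to the paper's distinguishing games $\overline{H}$ and $\overline{H}+\cgstar$, and both arguments rest on $H+\overline{H}\equiv 0\pmod{\mathcal{D}}$, the nonvanishing of the reduced form of $G+\overline{H}$, and Theorems \ref{thm:outcomenostar} and \ref{thm:outcomewithstar}. The only differences are presentational: you make explicit the multiset cancellation (which the paper merely asserts) and the case analysis on $\Delta$ and $\epsilon$ that the paper leaves implicit in its appeal to the outcome theorems.
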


\begin{proof}
The sufficiency is clear. Conversely, suppose without loss of generality that 
$X_1 \neq X_2$. In this case, the reduced form of $G+\overline{H}$ is not $0$. The games $G+\cgstar$ and $H+\cgstar$ are now distinguished by $\Hb$, since by Lemma \ref{thm:outcomewithstar}, $G+\overline{H}+\cgstar \not \in \P^-$ while $H+\overline{H}+\cgstar \equiv \cgstar\pmod{\mathcal{D}}$ and $\cgstar \in \P^-$.  Similarly $G$ and $H$ are distinguished by $\overline{H}+\cgstar$.
\end{proof}

\begin{lem}
If $G=(X_1,Y_1)$ and $H =(X_2,Y_2)$, then $G \not \equiv H +\cgstar \pmod{\mathcal{S}}$.
\end{lem}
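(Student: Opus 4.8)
The plan is to exhibit a single game in $\mathcal{S}$ that distinguishes $G=(X_1,Y_1)$ from $H+\cgstar$. The guiding observation is a parity-type invariant already visible in the outcome tables: by Theorem \ref{thm:outcomenostar}, every star-free position $(X,Y)$ lies in $\L^-\cup\R^-\cup\N^-$ and is \emph{never} a previous-player win, whereas Theorem \ref{thm:outcomewithstar} shows that $\P^-$ does occur among the star-bearing positions. So to separate $G$ from $H+\cgstar$ it should suffice to add a test game that drives the star-bearing side into $\P^-$ while leaving the star-free side star-free; the two are then forced into different outcome classes.

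The natural test game is $\overline{H}$. Since $H=(X_2,Y_2)$ is a finite sum of Sprigs and the conjugate of a Sprig is again a Sprig (its red and blue edges are swapped), $\overline{H}=(Y_2,X_2)\in\mathcal{S}$. Applying Corollary \ref{cor:starcolonconjugate} to each summand of $H$ and adding gives $H+\overline{H}\equiv 0\pmod{\mathcal{D}}$; testing this congruence against $\cgstar\in\mathcal{D}$ yields $o^-(H+\cgstar+\overline{H})=o^-(\cgstar)=\P$, where $o^-(\cgstar)=o^+(0)=\P$ by Theorem \ref{thm:starcolon}. On the other side, $G+\overline{H}=(X_1\cup Y_2,\,Y_1\cup X_2)$ carries no star term, so it is a star-free position; by Theorem \ref{thm:outcomenostar} its outcome is one of $\L^-,\R^-,\N^-$, and in particular it is not $\P^-$.

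Consequently $o^-(G+\overline{H})\neq \P = o^-(H+\cgstar+\overline{H})$, and since $\overline{H}\in\mathcal{S}$, this single game witnesses $G\not\equiv H+\cgstar\pmod{\mathcal{S}}$.

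I expect no serious obstacle here: the argument parallels the distinguishing step in the preceding lemma, with the simplification that the star-free side need not be analyzed via $\Delta$ and $\epsilon$ at all—the mere fact that $(X,Y)$-positions avoid $\P^-$ does all the work. The only points needing a line of justification are that $\overline{H}$ genuinely lies in $\mathcal{S}$ (closure of Sprig-sums under conjugation) and that $H+\overline{H}\equiv 0\pmod{\mathcal{D}}$ follows by summing Corollary \ref{cor:starcolonconjugate} over the summands of $H$.
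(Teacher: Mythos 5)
Your proposal is correct and matches the paper's own proof: the paper likewise distinguishes the two positions with the test game $\overline{H}$, noting that $H+\overline{H}+\cgstar\in\P^-$ while $G+\overline{H}\not\in\P^-$. Your write-up simply makes explicit the justifications the paper leaves implicit (closure of $\mathcal{S}$ under conjugation, summing Corollary \ref{cor:starcolonconjugate} over the summands, and the fact that star-free positions avoid $\P^-$ by Theorem \ref{thm:outcomenostar}).
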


\begin{proof}
The games are distinguished by $\overline{H}$, since $H+\overline{H}+\cgstar\in \P^-$ while 
$G+\overline{H}\not \in \P^-$.
\end{proof}

\begin{cor}
Let $G=(X_1,Y_1)$ and $H =(X_2,Y_2)$, $G \equiv H \pmod{\mathcal{D}}$ if and only if $X_1=X_2$ and $Y_1=Y_2$.
\end{cor}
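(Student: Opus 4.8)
The plan is to split into the two implications and, since this is the companion of the two preceding lemmas, to read $G$ and $H$ as reduced positions throughout: an arbitrary $(X,Y)$ is first replaced by its unique reduced form, which by Corollary \ref{cor:starcolonconjugate} cancels each matched pair $\cgstar\cgc x + \cgstar\cgc\overline{x} \equiv 0 \pmod{\mathcal{D}}$ without changing the equivalence class, so the claim is really about these reduced data. The sufficiency is then trivial: if $X_1 = X_2$ and $Y_1 = Y_2$ then $G$ and $H$ are literally the same position, so certainly $G \equiv H \pmod{\mathcal{D}}$.

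For necessity I would argue the contrapositive, exhibiting a single game in $\mathcal{D}$ that distinguishes $G$ and $H$ whenever $(X_1, Y_1) \neq (X_2, Y_2)$; this is exactly the $\delta = 0$ instance of the lemma asserting $G + \delta\cgstar \equiv H + \delta\cgstar$ iff the multisets agree, and I would reprove it directly rather than merely cite it. The distinguishing game I would use is $\Hb + \cgstar$, which lies in $\mathcal{S} \subset \mathcal{D}$. On one side, $\Hb$ is the conjugate sprig-sum $(Y_2, X_2)$, so $H + \Hb = (X_2 \uplus Y_2,\, Y_2 \uplus X_2)$ consists entirely of matched pairs and $H + \Hb + \cgstar \equiv \cgstar \pmod{\mathcal{D}}$ by Corollary \ref{cor:starcolonconjugate}, with $\cgstar \in \P^-$. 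On the other side, I claim the reduced form of $G + \Hb = (X_1 \uplus Y_2,\, Y_1 \uplus X_2)$ is not $0$: a short multiplicity check, using that $G$ and $H$ are reduced, shows that $X_1 \uplus Y_2 = Y_1 \uplus X_2$ forces $X_1 = X_2$ and $Y_1 = Y_2$, contrary to assumption. By Lemma \ref{zerozero} a nonzero reduced form means its advantage and edge are not both $0$, and Theorem \ref{thm:outcomewithstar} then gives $G + \Hb + \cgstar \notin \P^-$. Hence $\Hb + \cgstar$ distinguishes $G$ from $H$, so $G \not\equiv H \pmod{\mathcal{D}}$.

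I expect the only real step to be the outcome bookkeeping at the end: confirming that a nonzero reduced form of $G + \Hb$ keeps it off the single previous-player case of Theorem \ref{thm:outcomewithstar}, and that the multiplicity argument covers the symmetric possibilities $X_1 \neq X_2$ and $Y_1 \neq Y_2$ uniformly. Everything else is immediate from the cited results. Finally, I would note that this corollary, together with the $\delta = 1$ case of the same lemma and the lemma giving $G \not\equiv H + \cgstar \pmod{\mathcal{S}}$, completes the classification of sprig-sum positions up to indistinguishability in $\mathcal{D}$: two such positions are equivalent exactly when they carry the same multisets and agree on whether the $\cgstar$ summand is present.
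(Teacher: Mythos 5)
Your proof is correct and follows essentially the same route as the paper: the corollary is just the $\delta=0$ case of the preceding lemma, whose proof distinguishes $G$ from $H$ by exactly your game $\Hb+\cgstar$, noting that the reduced form of $G+\Hb$ is nonzero so $G+\Hb+\cgstar\not\in\P^-$ by Theorem \ref{thm:outcomewithstar}, while $H+\Hb+\cgstar\equiv\cgstar\in\P^-$. You merely make explicit the details the paper leaves implicit (the multiplicity check that $X_1\uplus Y_2=Y_1\uplus X_2$ forces the multisets to agree, and the appeal to Lemma \ref{zerozero}).
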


Our only reductions are $\cgstar+\cgstar\equiv 0\pmod{\mathcal{D}}$ and $\cgstar:x+\cgstar:-x\equiv 0\pmod{\mathcal{D}}$.  We can now describe the mis\`{e}re monoid associated with Sprigs, $\mathscr{M}_{\mathcal{S}}$, which we write multiplicatively by convention.

For each positive number $\frac{p}{2^q}$, with $p\equiv 1 \pmod{2}$, map $\cgstar:\frac{p}{2^q}$ to $\kappa_{p,q}$ and $\cgstar:\frac{-p}{2^q}$ to $\kappa_{p,q}^{-1}$, and let $K$ be the set of all resulting variables.  These mappings along with $0\mapsto e$ and $\cgstar \mapsto \alpha$ give us the monoid
\[\mathscr{M}_{\mathcal{S}} = \langle e,\alpha,K \; \vert \; \alpha^2=e, \kappa_{p,q}\cdot \kappa_{p,q}^{-1}=1 \; \forall \kappa_{p,q}\in K \rangle.\] The outcome tetrapartition, i.e.~the sets $\mathcal{S}\cap\L^-$, $\mathcal{S}\cap\R^-$, $\mathcal{S}\cap\N^-$, and $\mathcal{S}\cap\P^-$, has only one finite member, namely $\mathcal{S}\cap\P^-=\{\cgstar\}$. The others sets are given implicitly by Theorems \ref{thm:outcomenostar} and \ref{thm:outcomewithstar}.

\section{Normal-play Sprigs}\label{sec:normalplay}

Using different notation (and without \textsc{hackenbush}) Conway \citep[page 194]{Conwa2001} considered the Sprigs problem in \np; he gives a characterization of the outcomes of sums of such positions. We present an alternative to his Theorem 88 in our terminology. 

\begin{thm}\label{thm:normal}
If $G=(X,Y)=\sum_{x\in X}\cgstar\cgc x + \sum_{y\in Y}\cgstar\cgc -y$ then
\begin{itemize}
\item[(i)] \[ G 
\begin{cases}
>0  &\mbox{if } \Delta(G)>1  \mbox{ or } \Delta(G)=0,1 \mbox{ and } \epsilon(G)>0;\\
 = 0 &\mbox{if } \Delta(G)=0 \mbox{ and } \epsilon(G)=0;\\
 \cgfuzzy 0 &\mbox{if } \Delta(G)=1 \mbox{ and } \epsilon(G)\leq 0
 \mbox{ or } \Delta(G)=-1 \mbox{ and } \epsilon(G)\geq 0;\\
 <0  &\mbox{if } \Delta(G)<-1  \mbox{ or } \Delta(G)=0,-1 \mbox{ and } \epsilon(G)<0;
\end{cases}
\]
\item[(ii)] \[ G +\cgstar
\begin{cases}
> 0 &\mbox{if } \Delta(G)\ge 1\\
\cgfuzzy 0 &\mbox{if } \Delta(G)=0\\
<0 &\mbox{if } \Delta(G)\le -1\\
\end{cases}
\]
\end{itemize}
\end{thm}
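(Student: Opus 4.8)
The plan is to obtain Theorem \ref{thm:normal} from the \mpl\ results already in hand, via a single bridging fact: for every finite sum of Sprigs $H$ (allowing the bare $\cgstar=\cgstar\cgc 0$ as a degenerate Sprig),
\[ o^+(H)=o^-(H+\cgstar) \qquad (\star). \]
Granting $(\star)$, both parts follow at once. For part (i) take $H=G$, so $o^+(G)=o^-(G+\cgstar)$, and Theorem \ref{thm:outcomewithstar} evaluates the right-hand side; rewriting the classes $\L,\R,\N,\P$ as $>0,<0,\cgfuzzy 0,=0$ reproduces the four cases of (i) verbatim. For part (ii) take $H=G+\cgstar\in\mathcal{S}$, so $o^+(G+\cgstar)=o^-(G+\cgstar+\cgstar)$; since $\cgstar+\cgstar\equiv 0\pmod{\mathcal{D}}$ by Corollary \ref{cor:star+star} we get $G+\cgstar+\cgstar\equiv G\pmod{\mathcal{D}}$, hence the right-hand side is $o^-(G)$, which Theorem \ref{thm:outcomenostar} evaluates to the three cases of (ii). Throughout I may assume $G$ reduced, since $\cgstar\cgc x+\cgstar\cgc\overline{x}=0$ in \np\ leaves $o^+$ unchanged and $\Delta,\epsilon$ are read from the reduced form.

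The real content is therefore $(\star)$, which I would prove by induction on the number of edges of $H$ through a strategy-pairing argument. The governing idea is that the spare $\cgstar$ in $H+\cgstar$ is a reserved tempo move reconciling the two ending conventions. In the clean case, a player follows a \np-winning strategy for $H$ while leaving the spare $\cgstar$ untouched: if the opponent never chops it, all play stays in $H$ and ends with our player having made the last move of $H$ (an \np\ win), after which the opponent is forced to chop the surviving $\cgstar$, and then our player cannot move and so wins under \mpl. This is precisely the parity correction that converts ``make the last move'' into ``force the opponent to make the last move.''

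The step I expect to be the main obstacle is that $(\star)$ is genuinely special to Sprig sums and is \emph{not} a black-box parity principle: it fails for general dicots (one checks, e.g., that $o^+(\combgame{\{0,\cgstar\mid 0\}})=\N$ while $o^-(\combgame{\{0,\cgstar\mid 0\}}+\cgstar)=\R$). Consequently the induction must exploit structure peculiar to Sprigs, namely that in any nonterminal Sprig sum both players always retain the move-to-$0$ (chop a green edge) in every surviving component, so the reserved-star tempo is always available and can be deferred. The delicate subcase is when the opponent spends the spare $\cgstar$ early, handing our player the move at a position where the \np\ strategy expected the opponent to move; there I would not argue by raw parity but instead align the induction with the advantage/edge case split of Theorems \ref{thm:outcomenostar} and \ref{thm:outcomewithstar}, so that the boundary values $\Delta\in\{-1,0,1\}$ (where $\epsilon$ decides the outcome) are handled by the same $\cgstar\cgc x\gtrdot\cgstar\cgc y$ ordering of Theorem \ref{thm:orderedsprigs} and the ``chop the opponent's best Sprig'' reasoning already used in the \mpl\ arguments.
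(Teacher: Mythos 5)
Your route is genuinely different from the paper's, but it has a gap at its load-bearing step. The paper proves Theorem \ref{thm:normal} by direct normal-play inductions that parallel the proofs of Theorems \ref{thm:outcomewithstar} and \ref{thm:outcomenostar} (needing none of the Section 2 machinery), and only afterwards records your bridging identity $(\star)$ as Corollary \ref{cor:plusstar} --- i.e.\ in the paper $(\star)$ is a \emph{consequence} of this theorem, not an ingredient. Your reduction of parts (i) and (ii) to $(\star)$ is correct and clean; everything therefore rests on your independent proof of $(\star)$, and that proof breaks at exactly the subcase you flag as delicate. Suppose you are showing that a normal-play second-player win for Left in $H=(X,Y)$ yields a misère second-player win for Left in $H+\cgstar$, and Right's \emph{first} move is to chop the spare $\cgstar$. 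The residual position is $H$ itself --- not a proper follower --- so induction on edges gives nothing, and you need: Left wins $H$ in normal play moving second $\Rightarrow$ Left wins $H$ in misère play moving first. By Theorem \ref{thm:outcomenostar} the conclusion is equivalent to $\Delta(H)\ge 0$, so you need that a normal-play second-player win forces $\Delta(H)\ge 0$. This has a \emph{normal-play} hypothesis, so Theorems \ref{thm:outcomenostar}, \ref{thm:outcomewithstar}, \ref{thm:orderedsprigs}, which speak only of misère outcomes, cannot supply it on their own; knowing which $\Delta,\epsilon$ values are compatible with which normal-play outcomes is precisely Theorem \ref{thm:normal}, the statement being proved. (The mid-game star-chop is fine, as you suggest: there the residual position is a proper follower and induction plus the two misère theorems closes the case.) One can partially rescue the immediate chop: if $X\neq\emptyset$, let Right chop a Left Sprig; Left's winning reply $H^{RL}$ is smaller, so by induction and Theorem \ref{thm:outcomewithstar} it has $\Delta(H^{RL})\ge 0$, and since moves shift $\Delta$ by at most one this forces $\Delta(H)\ge 0$. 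But when $X=\emptyset$ you must show outright that Right, moving first in normal play, wins any nonempty sum of all-Right Sprigs --- which requires the chop-a-red-edge parity argument, i.e.\ the $\epsilon$ bookkeeping. Supplying such arguments amounts to carrying out the paper's direct normal-play induction, at which point the bridge $(\star)$ is scaffolding you no longer need.

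A secondary error: your dicot counterexample is miscomputed. In fact $o^-(\combgame{\{0,\cgstar\mid 0\}}+\cgstar)=\N$, not $\R$: moving first, Left takes $\combgame{\{0,\cgstar\mid 0\}}$ to $0$, leaving $\cgstar$; Right is forced to take it, and Left, unable to move, wins under misère (Right wins symmetrically moving first), so this position actually satisfies $(\star)$. A correct witness that $(\star)$ is not a universal dicot principle is $H=\cgstar_2+\cgstar_2$: then $o^+(H)=\P$, while $H+\cgstar$ is misère Nim with heaps $2,2,1$, a first-player win, so $o^-(H+\cgstar)=\N$. Your larger point --- that any proof of $(\star)$ must exploit structure special to Sprigs --- is right, but that is exactly why the missing implication above is the heart of the matter rather than a detail.
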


\begin{proof} The proofs are similar to those of Theorems \ref{thm:outcomewithstar} 
and \ref{thm:outcomenostar}, respectively, but without the need of the results in Section 2.
\end{proof}

This paper now has one very nice conclusion. In any collection of star-based number positions (or any sum of Sprigs), outcome is fixed by adding $\cgstar$ and toggling between normal and mis\`{e}re play conventions. This is based on comparing Theorems \ref{thm:outcomewithstar} and \ref{thm:outcomenostar} and Theorem \ref{thm:normal}.

\begin{cor}\label{cor:plusstar}
If $G$ is a collection of star-based numbers, then $o^+(G)=o^-(G+\cgstar)$ and $o^+(G+\cgstar)=o^-(G)$.
\end{cor}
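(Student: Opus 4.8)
The plan is to observe that Corollary \ref{cor:plusstar} is, after a single normalization step, nothing more than a line-by-line comparison of the case tables already proved: the statements of Theorems \ref{thm:normal}, \ref{thm:outcomewithstar}, and \ref{thm:outcomenostar} have been phrased with identical hypotheses on $\Delta$ and $\epsilon$, so the comparison will go through verbatim once both sides are reduced to a common shape.

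First I would reduce an arbitrary collection of star-based numbers to canonical shape. Separating the summands by the sign of their dyadic part, every such $G$ is a disjunctive sum $(X,Y) + k\cgstar$ for some $k \ge 0$, the extra $\cgstar$'s arising from the summands $\cgstar\cgc 0 = \cgstar$. In \np\ we have $\cgstar + \cgstar = 0$, and in \mpl\ we have $\cgstar + \cgstar \equiv 0 \pmod{\mathcal{D}}$ by Corollary \ref{cor:star+star}. Because every sum of \hs s and stars is a dicot, applying the definition of $\equiv \pmod{\mathcal{D}}$ with the remaining summands as the test game shows that deleting a pair of $\cgstar$'s leaves the misère outcome unchanged; the same is trivial for $o^+$. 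Hence both $o^+$ and $o^-$ are unaffected by such deletions, and it suffices to prove the two identities for $G = (X,Y)$: the form $(X,Y)+\cgstar$ then follows because $\big((X,Y)+\cgstar\big)+\cgstar$ collapses to $(X,Y)$ under these reductions, turning each identity for the $+\cgstar$ form into one of the two identities already verified for $(X,Y)$.

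Next, to prove $o^+(G) = o^-(G+\cgstar)$ for $G=(X,Y)$, I would translate Theorem \ref{thm:normal}(i) into outcome language via the standard \np\ dictionary $G>0 \Leftrightarrow o^+(G)=\L$, $G<0 \Leftrightarrow o^+(G)=\R$, $G=0 \Leftrightarrow o^+(G)=\P$, and $G \cgfuzzy 0 \Leftrightarrow o^+(G)=\N$. Reading off the four branches of Theorem \ref{thm:normal}(i) produces exactly the four branches of Theorem \ref{thm:outcomewithstar}, so the outcomes agree in every case. The second identity $o^+(G+\cgstar)=o^-(G)$ is handled the same way, comparing Theorem \ref{thm:normal}(ii) with Theorem \ref{thm:outcomenostar}: both assign $\L$ precisely when $\Delta(G) \ge 1$, $\R$ precisely when $\Delta(G) \le -1$, and $\N$ precisely when $\Delta(G)=0$ (with the \np\ value $\cgfuzzy 0$ corresponding to $\N$).

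Since the whole argument is a comparison of tables that are already established, there is no genuine analytic obstacle. The only points needing care are the bookkeeping of the $\cgstar+\cgstar$ reduction across the two conventions and correct use of the \np\ sign-versus-outcome dictionary, so that $\P$ is matched with $=0$ and $\N$ with $\cgfuzzy 0$ rather than the reverse. The conceptual content — the reason the corollary is worth recording — is exactly that the three theorems were stated with matching $\Delta$/$\epsilon$ conditions so that toggling convention and adding $\cgstar$ preserves the outcome.
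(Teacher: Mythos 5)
Your proposal is correct and matches the paper's approach: the paper justifies Corollary \ref{cor:plusstar} precisely by comparing the case tables of Theorems \ref{thm:outcomenostar}, \ref{thm:outcomewithstar}, and \ref{thm:normal}, which is exactly your table-matching argument. Your explicit normalization step (splitting off $\cgstar\cgc 0 = \cgstar$ summands and cancelling star pairs via $\cgstar+\cgstar=0$ and Corollary \ref{cor:star+star}) is a detail the paper leaves implicit, and you handle it correctly.
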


\section{Discussion}

Theorems in Section \ref{sec:miseregeneral} and Corollary \ref{cor:starcolonconjugate} give great hope for further progress in the study of \mpl\ games, especially dicots and ordinal sums. We end with some questions. 

\begin{question}
What can be said about \mpl\ Flowers? 
\end{question}
Generalizing results from $\cgstar$ to $\cgstar_2$ (and other nimbers) is troublesome because $\cgstar_2 + \cgstar_2 \not \equiv 0 \pmod{\mathcal{D}}$. However, the relative simplicity of the partizan component makes this appear tractable.

\begin{question}
What is the largest universe in which $\cgstar \cgc x + \cgstar \cgc \overline{x} \equiv 0$?  
\end{question}
This equivalence does not hold in general and the universe of dicots seem like a natural fit, but it is not clear that there is not a much larger universe where this holds.

\begin{question}
For which positions $G$ is $G \equiv 0\pmod{\mathcal{D}}$?
\end{question}
This question is very broad; it can be difficult to tell from the game tree of a position if it is a sum of positions that we otherwise know to be equivalent to $0$. Less broadly, it would be helpful to have more sums of positions that are equivalent to $0$.

\begin{question}
  Which other classes of positions satisfy the hypothesis of Theorem \ref{thm:G-G}?
\end{question}
Answers to this question give answers to the previous question. We also wonder in particular which star-based positions have this property.

\begin{question}
For which positions $G$ is $o^+(G)=o^-(G+\cgstar)$ and $o^+(G+\cgstar)=o^-(G)$?\end{question}
In addition to Sprigs, the game $\combgame{\{ \overline{1} | \cdot\}}$ has this property. Certainly there are othes.

\bibliography{games3}
\bibliographystyle{plain}
\end{document}